\DeclareSymbolFont{symbolsC}{U}{txsyc}{m}{n}
\DeclareMathSymbol{\strictif}{\mathrel}{symbolsC}{74}              
\newcommand{\CC}{\mathcal{C}}
\def\om{\omega}
\def\bo{\mathop{\scalerel*{\Box}{gX}}}           
\def\di{\mathord{\scalerel*{\Diamond}{gX}}}     
\def\C{\mathbb{C}}
\def\cl{\mathop{\mathsf{Cl}}}   
\def\de{\mathop{\mathsf{De}}}   
\def\D{\mathbb{D}}
\def\F{\mathcal{F}}
\renewcommand{\ge}{\geqslant}       
\renewcommand{\le}{\leqslant} 
\def\int{\mathop{\sf Int}} 
\def\inv{^{-1}}
\def\M{\mathcal{M}}
\def\P{\mathbb{P}}
\def\ph{\varphi}
\def\<{\langle}
\def\>{\rangle}
\def\sub{\subseteq}
\def\Vec#1#2{#1_0,\ldots{},#1_{#2}}
\def\CC{\mathscr{C}}
\def\up{^\uparrow}
\begin{document}

\title*{Modal Logics of Some Hereditarily Irresolvable Spaces}
\author{Robert Goldblatt}
\institute{Robert Goldblatt \at Victoria University of Wellington, \email{rob.goldblatt@msor.vuw.ac.nz} }
\maketitle


\abstract{
A topological space is \emph{hereditarily $k$-irresolvable} if none of its subspaces can be partitioned into $k$ dense subsets, We use this notion  to provide a topological semantics for a sequence of modal logics whose $n$-th member K4$\mathbb{C}_n$ is characterised by validity in  transitive Kripke frames of circumference at most $n$. We show that under the interpretation of the modality $\Diamond$ as the derived set (of limit points) operation, K4$\mathbb{C}_n$ is characterised by validity in all spaces that are hereditarily $n+1$-irresolvable and have the T$_D$ separation property.
\newline\indent
We also identify the extensions of K4$\mathbb{C}_n$ that result when the class of spaces involved is restricted to those that are crowded, or densely discrete, or openly irresolvable, the latter meaning that every non-empty open subspace is  2-irresolvable. Finally we give a topological semantics for K4M, where M is the McKinsey axiom. 
}

\keywords{modal logic, Kripke frame, circumference,  topological semantics, derived set, resolvable space, hereditarily irresolvable,  openly irresolvable, dense, crowded, scattered space,   Alexandrov topology.\\
2020 \emph{Mathematics Subject Classification}: 03B45, 54F99.}


\section{Introduction}      

One theme of this article is the use of \emph{geometric} ideas in the semantic analysis of logical systems.
Another is the use of \emph{relational} semantics in the style of Kripke.
Both themes feature prominently in the many-faceted research portfolio of Alasdair Urquhart.
In the area of relevant logic he discovered how to construct certain  relational models of relevant implication out of projective geometries, and related these to modular geometric lattices (\citeyear{urqu:rele83}, see also \citeyear{urqu:geom17}). This led to his striking demonstration in \citeyearpar{urqu:unde84} that the main systems of relevant implication are undecidable, and then to a proof in \citeyearpar{urqu:fail93} that these systems fail to satisfy the Craig interpolation theorem.
In the area of Kripke semantics for modal logics, in addition to an early book \citeyearpar{resc:temp71} with Rescher on temporal logics, his contributions have  included the construction in \citeyearpar{urqu:deci81} of a normal logic that is recursively axiomatisable and has the finite model property but is undecidable; and the proof in \citeyearpar{urqu:firs15} that there is a formula of quantified S5 which, unlike the situation of propositional S5, is not equivalent to any formula of modal degree one.
His article \citeyearpar{urqu:topo78} on topological representation of lattices has been very influential. It assigned to any bounded lattice a dual topological space with a double ordering, and showed that the original lattice can be embedded into a  lattice of certain `stable' subsets of its dual space. This generalised the topological representations of \cite{ston:repr36} for Boolean algebras and \cite{prie:repr70} for distributive lattices. It stimulated the development of further duality theories for lattices, such as those of  \cite{hart:topo92,hart:exte93},  \cite{allw:dual93}, \cite{plos:natu95} and \cite{hart:ston97}. The structure of Urquhart's dual spaces was exploited by \cite{allw:krip93} to develop a Kripke semantics for linear logic, and by \cite{dzik:rela06} to do likewise for non-distributive logics with various negation operations. Urquhart's article also played an important role in the development of the important notion of a \emph{canonical extension}. This was first introduced by \cite{jons:bool51} for Boolean algebras with operators, and  is closely related to the notion of  canonical model of a modal logic. After several decades of evolution, \cite{gehr:boun01} gave an axiomatic definition of a canonical extension of any bounded lattice-based algebra, showing that it is unique up to isomorphism. In proving that such an extension exists, they observed that it can be constructed as the embedding of the original algebra into the lattice of stable subsets of its Urquhart dual space.  \cite{crai:reco14} have clarified the relationship between Urquhart's construction and other manifestations of canonical extensions. This area has now undergone substantial development and application, by Gehrke and others, of theories of canonical extensions and duality for lattice expansions. There are surveys of this progress in \citep{gehr:cano18} and \citep{gold:cano18}. In \citep{gold:morp20} a new notion of bounded morphism between polarity structures has been introduced that provides a duality with homomorphisms between lattices with operators.
The seed that Urquhart planted has amply borne fruit.

\bigskip
The geometrical ideas of the present article come from topology. Our aim is to provide a topological semantics for a sequence  of modal logics that were originally defined by properties of their binary-relational Kripke models. The $n$-th member of this sequence is called 
K4$\C_n$, and is the smallest normal extension of the logic K4 that includes an axiom scheme $\C_n$ which will be described below. It was shown in \citep{gold:moda19} that the theorems of K4$\C_n$ are characterised by validity in all finite transitive Kripke frames that have circumference at most $n$, meaning that any cycle in the frame has length at most $n$, or equivalently that any non-degenerate cluster has at most $n$ elements. For $n\geq 1$, adding the reflexivity axiom $ \ph\to\di\ph$ to K4$\C_n$ gives the logic S4$\C_n$ which is characterised by validity in all finite reflexive transitive Kripke frames that have circumference at most $n$. It was also shown that S4$\C_n$ has a topological semantics in which any formula of the form $\di\ph$ is interpreted as the topological closure of the interpretation of $\ph$.
Under this interpretation, S4$\C_n$ is characterised by validity in all topological spaces that are 
\emph{hereditarily $n+1$-irresolvable}. 
Here a space is called  \emph{$k$-resolvable} if it can be partitioned into $k$ dense subsets, and is 
hereditarily $k$-irresolvable if none of its non-empty subspaces are $k$-resolvable. $k$ could be any cardinal, but we will deal only with finite $k$.

The interpretation of $\di$ as closure, now known as $C$-semantics, appears to have first been considered by \cite{tang:alge38}. \cite{mcki:solu41} showed that the formulas that are $C$-valid in all topological spaces are precisely the theorems of S4.
\cite{mcki:alge44,mcki:theo48} then undertook a much deeper analysis which showed that S4 is characterised by $C$-validity in any given space that has a certain `dissectability' property that is possessed by every finite-dimensional Euclidean space, and more generally by any
  metric space that is crowded, or dense-in-itself, i.e.\ has no isolated points.
They also suggested studying an alternative topological interpretation, now called  $d$-semantics, in which
$\di\ph$ is taken to be  the derived set, i.e.\ the set of limit points, of the interpretation of $\ph$. This $d$-semantics does not  validate the transitivity axiom $\di\di \ph\to\di\ph$. That  is $d$-valid in a given space iff the space satisfies the T$_D$  property that every derived set is closed.

The logic K4$\C_0$ is in fact the  G\"odel-L\"ob provability logic, which is known to have a topological characterisation  by $d$-validity in all spaces that are \emph{scattered}, meaning that every non-empty subspace has an isolated point.
The logic K4$\C_1$ was shown by  \cite{gabe:topo04} to be characterised by  $d$-validity in hereditarily 2-irresolvable spaces. Such spaces satisfy the T$_D$ property, which however may not hold in hereditarily $k$-irresolvable spaces for $k>2$. The principal new result proven here is that for  $n>1$, K4$\C_n$ is characterised by  $d$-validity in all spaces that are both hereditarily $n+1$-irresolvable and T$_D$. Our proof adapts the argument of  \citep{gabe:topo04} and also makes use of certain spaces that are  $n$-resolvable but not $n+1$-resolvable. These are provided by a construction of
\cite{elki:ultr69}.

Sections 2 and 3 review  the theory of relational and topological semantics for modal logic that we will be using. Section 4 is the heart of the article: it discusses hereditary irresolvability and gives the constructions that lead to our  characterisation of K4$\C_n$, Section 5 gives characterisations of some extensions of K4$\C_n$ that correspond to further topological constraints.

\section{Frames and Logics}
We begin with a review of relation semantics for propositional modal logic. A standard reference is \citep{blac:moda01}.
Formulas $\ph,\psi,\dots$ of  are constructed from some denumerable set of propositional variables by the  Boolean connectives $\top$,  $\bot$, $\neg$, $\land$, $\lor$, $\to$ and the unary modalities $\di$ and $\bo$. 
We write $\bo^*\ph$ as an abbreviation of the formula $\ph\land\bo\ph$, and $\di^*\ph$ for $\ph\lor\di\ph$.

A \emph{frame} $\F=(W,R)$ consists of a binary relation $R$ on a set $W$.
Each $x\in W$ has the set $R(x)=\{y\in W:xRy\}$ of \emph{$R$-successors}.
A \emph{ model} $\M=(W,R,V)$  on a frame has a valuation function $V$ assigning to each variable $p$ a subset $V(p)$ of $W$. The model then assigns to each formula $\ph$ a subset  $\M(\ph)$ of $W$, thought of as the set of points  at which $\ph$ is true. These truth sets are defined by induction on the formation of $\ph$, putting $\M(p)=V(p)$ for each variable $p$, interpreting each Boolean connective by the corresponding Boolean operation on subsets of $W$, and putting
\begin{align*}
\M(\di\ph) &= R^{-1}\M(\ph) =\{x\in W: R(x)\cap \M(\ph)\ne\emptyset\},
\\
\M(\bo\ph) &=\{x\in W: R(x)\sub \M(\ph)\}.
\end{align*}
A formula $\ph$ is \emph{true in model $\M$}, written  $\M\models\ph$, if 
$\M(\ph)=W$; and is \emph{valid in frame $\F$}, written $\F\models\ph$,  if it is true in all models on $\F$. 

A \emph{normal logic} is any set $L$ of formulas that
 includes all tautologies and all instances of the scheme
K:\enspace $\bo(\ph\to\psi)\to(\bo\ph\to\bo\psi)$,
and whose rules include  modus ponens and $\Box$-generalisation (from $\ph$ infer $\bo\ph$).  
The members of a logic $L$ may be referred to as the \emph{$L$-theorems}.
The set $L_\mathcal{C}$ of all formulas valid in (all members of) some given class $\mathcal{C}$ of frames is a normal logic. A logic is \emph{characterised by validity in $\mathcal{C}$}, or is \emph{sound and complete for validity in $\mathcal{C}$}, if it is equal to $L_\mathcal{C}$. The smallest normal logic, known as K, is characterised by validity  in all frames.

A logic is \emph{transitive}  if it contains all instances of the scheme
4:\enspace $\di\di\ph\to\di\ph$,  which is valid in precisely those frames that are transitive, i.e.\ their relation $R$ is transitive.
The smallest transitive normal logic,  known as K4, is characterised by validity  in all transitive frames.

In a transitive frame $\F=(W,R)$, a \emph{cluster} is a subset $C$ of $W$ that is an equivalence class under the equivalence relation
$
\{(x,y):x=y\text{ or } xRyRx\}.
$
The $R$-cluster containing $x$ is $C_x=\{x\}\cup\{y:xRyRx\}$.
If $x$ is irreflexive, i.e.\ not $xRx$, then $C_x=\{x\}$, and $C_x$ is called a \emph{degenerate} cluster.
Thus if $R$ is an irreflexive relation, then all clusters are degenerate.
If $xRx$, then $C_x$ is \emph{non-degenerate}. If $C$ is a non-degenerate cluster then it contains no irreflexive points and the relation $R$ is universal on $C$ and maximally so. 
A \emph{simple} cluster is non-degenerate with one element, i.e.\ a singleton $C_x=\{x\}$ with $xRx$.
If $R$ is \emph{antisymmetric}, i.e.\ $xRyRx$ implies $x=y$ in general, then every cluster is a singleton, so is either simple or degenerate.

The relation $R$ lifts to a well-defined relation on the set of clusters  by putting $C_xRC_y$ iff $xRy$. This relation is transitive and antisymmetric on the set of clusters. A cluster $C_x$ is \emph{final} if it is maximal in this ordering, i.e.\ there is no cluster $C\ne C_x$ with $C_xR C$. This is equivalent to requiring that $xRy$ implies $yRx$. 

We take the \emph{circumference} of a frame $\F$ to be the supremum of the set of all lengths of cycles in $\F$, where a cycle of length $n\geq 1$ is a finite sequence $x_1,\dots,x_{n}$ of distinct points such that $x_1R\cdots Rx_{n}Rx_1$.
The circumference is 0 iff there are no cycles.
In a transitive frame, the points of any cycle  are $R$-related to each other and are reflexive, and all belong to the same non-degenerate cluster. Conversely any finite non-empty subset of a non-degenerate cluster can be arranged (arbitrarily) into a cycle. Thus  if a finite transitive frame has  circumference $n\geq 1$, then $n$ is equal to the size of a largest non-degenerate cluster. The  circumference is $0$ iff the frame is irreflexive, i.e.\ has only degenerate clusters.
A  finite transitive frame has  circumference \emph{at most} $n$ iff each of its non-degenerate clusters has at most $n$ members.

Now given formulas $\Vec{\ph}{n}$,
define the formula $\P_n(\Vec{\ph}{n})$ to be
$$
\di(\ph_1\land\di(\ph_2\land\cdots \land\di(\ph_n\land\di \ph_0))\cdots)
$$
provided that $n\ge 1$. For the case $n=0$, put $\P_0(\ph_0)=\di\ph_0$.
Let $\D_n(\Vec{\ph}{n})$ be $\bigwedge_{ i<j\le n}\neg(\ph_i\land \ph_j)$ (for $n=0$ this is the empty conjunction $\top$\,).
Define $\C_n$ to be the scheme
$$
\bo\nolimits^*\D_n(\Vec{\ph}{n}) \to(\di \ph_0 \to
\di(\ph_0\land\neg\P_n(\Vec{\ph}{n}) ).
$$
Let K4$\C_n$ be the smallest transitive normal logic that includes the scheme $\C_n$. In
 \cite[Theorems 1 and 4]{gold:moda19}, the following were shown. 

\begin{theorem} \label{K4Cnfmp}
For all $n\geq 0$,
\begin{enumerate}[\rm(1)]
\item 
A transitive frame validates $\C_n$ iff it has circumference at most $n$ and no strictly ascending chains (i.e.\ no sequence $\{x_m:m<\om\}$ with $x_mRx_{m+1}$ but not  $x_{m+1}Rx_{m}$ for all $m$).
\item
A finite transitive frame validates $\C_n$ iff it has circumference at most $n$.
\item
A formula is a theorem of K4$\C_n$ iff it is valid in all finite transitive frames that have circumference at most $n$.
\qed
\end{enumerate}
\end{theorem}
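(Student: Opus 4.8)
The plan is to prove the three parts in order, treating (2) as an immediate consequence of (1) and the soundness half of (3) as a formality, so that the real work lies in the two semantic constructions behind (1) and in the finite model property needed for completeness in (3).

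For the ``if'' direction of (1) I would argue semantically. Assume $\F=(W,R)$ is transitive, has circumference at most $n$, and has no strictly ascending chain; equivalently the induced partial order on clusters satisfies the ascending chain condition, so every non-empty set of clusters has a maximal element. Given a model on $\F$ and a point $x$ at which $\bo^*\D_n(\Vec{\ph}{n})$ and $\di\ph_0$ hold, let $\mathcal{D}$ be the set of clusters reachable from $x$ that contain a $\ph_0$-point; it is non-empty, so I choose a maximal $C^*\in\mathcal{D}$ and a $\ph_0$-point $w\in C^*$, and claim $w$ witnesses $\di(\ph_0\land\neg\P_n)$. If instead $\P_n(\Vec{\ph}{n})$ held at $w$, there would be a path $wRy_1R\cdots Ry_nRy_0$ with $\ph_i$ true at $y_i$ and $\ph_0$ true at $y_0$; since $y_0$ is then a $\ph_0$-point reachable from $x$ with $C^*RC_{y_0}$, maximality forces $y_0\in C^*$, whence $w,y_1,\dots,y_n,y_0$ lie on a cycle and so in one cluster. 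The disjointness clauses of $\bo^*\D_n$, which hold at all points reachable from $x$, make $y_0,\dots,y_n$ pairwise distinct, producing a cluster of size $\ge n+1$ and contradicting circumference $\le n$. Hence $\C_n$ is valid.

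For the ``only if'' direction of (1) I would prove the contrapositive with two falsifying models. If some non-degenerate cluster has distinct points $c_0,\dots,c_n$, I interpret each $\ph_i$ as the singleton $\{c_i\}$: then $\bo^*\D_n$ holds everywhere, $\P_n$ holds at $c_0$ along the cycle $c_0Rc_1R\cdots Rc_nRc_0$, and since $c_0$ is the only $\ph_0$-point, $\di(\ph_0\land\neg\P_n)$ fails while $\di\ph_0$ holds at $c_0$, refuting $\C_n$. If instead there is a strictly ascending chain $x_0Rx_1R\cdots$, I colour $x_m$ by $\ph_{m\bmod(n+1)}$ and make every $\ph_i$ false off the chain: the colouring is a partition, so $\bo^*\D_n$ holds; from every $\ph_0$-point one follows the chain upward through $\ph_1,\dots,\ph_n$ to the next $\ph_0$-point, so $\P_n$ holds at every $\ph_0$-point, and as the chain never terminates $\ph_0\land\neg\P_n$ is false everywhere while $\di\ph_0$ holds at $x_0$, again refuting $\C_n$. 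Part (2) then follows at once, since a finite frame cannot contain an infinite strictly ascending chain, so for finite transitive frames ``circumference $\le n$'' already captures the frame condition of (1).

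It remains to prove (3). Soundness is routine: by (2) every finite transitive frame of circumference $\le n$ is transitive and validates $\C_n$, so the logic $L_{\mathcal{C}}$ of that class contains K4 and every instance of $\C_n$, and hence, by the minimality of K4$\C_n$, contains it. The substance is completeness, which I would obtain through the finite model property. Starting from a formula $\ph$ that is not a K4$\C_n$-theorem, I would take the canonical model --- whose frame is transitive, since K4 is canonical --- locate a point refuting $\ph$, and filtrate through a finite subformula-closed set $\Sigma$ containing $\ph$. A plain transitive filtration yields a finite transitive frame, but it need not have circumference $\le n$; the crux is to design the filtration relation so that it stays transitive while forcing every cluster to have at most $n$ elements, verifying the filtration conditions with the aid of the theoremhood of $\C_n$. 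This generalises the delicate step in the finite model property for K4$\C_0=\,$GL, where one refines the relation to be irreflexive (circumference $0$) using the L\"ob axiom. I expect this circumference-bounding filtration --- keeping the truth lemma intact while cutting clusters down to size $n$ --- to be the main obstacle; once it is in place, the filtered model is a finite transitive frame of circumference $\le n$ refuting $\ph$, completing the characterisation.
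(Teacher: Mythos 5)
Your parts (1) and (2) are correct and essentially complete. In the ``if'' direction of (1), choosing a cluster $C^*$ maximal among the clusters reachable from $x$ that contain a $\ph_0$-point (the passage from ``no strictly ascending chains'' to ``every non-empty family of clusters has a maximal member'' uses dependent choice, which is standard here), and then using $\bo^*\D_n$ to force the points of a putative $\P_n$-path to be pairwise distinct members of a single non-degenerate cluster, does yield the required $(n+1)$-cycle contradiction; and both of your falsifying models for the converse check out, including the degenerate case $n=0$ --- for the chain model you are implicitly using that the points $x_m$ of a strictly ascending chain in a transitive frame are pairwise distinct, which is true but deserves a line of proof, since the disjointness of the truth sets (hence $\D_n$) depends on it. The soundness half of (3) is likewise fine. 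For calibration: the paper under review contains no proof of this theorem at all; it is imported from \cite[Theorems 1 and 4]{gold:moda19}, so the relevant comparison is with that source, where frame-condition arguments for (1) and (2) of essentially your kind are carried out.

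The genuine gap is the completeness half of (3). You name the ``circumference-bounding filtration'' as the crux and then stop: no definition of the filtration relation, no verification of the filtration conditions, and no argument that non-degenerate clusters of the quotient have at most $n$ elements. This is not a deferrable routine step --- it is the substance of the theorem, and it is where the cited source spends its main technical effort. The difficulty is structural: the truth lemma forces the filtration relation to contain the projection of the canonical relation (so clusters tend to inflate), while circumference at most $n$ demands they stay small. Merely invoking theoremhood of $\C_n$ --- i.e.\ its truth at every point of the canonical model --- does not obviously bound cluster size in the quotient: to transplant your part (1) argument into the filtrated frame you would need, for any $n+1$ points of a cluster, formulas $\Vec{\ph}{n}$ from the finite closed set $\Sigma$ that are pairwise disjoint over that cluster and arrangeable into a $\P_n$-path, and constructing those (or redefining the relation, as in the strict-relation trick that handles $\mathrm{K4}\C_0=\mathrm{GL}$, while keeping transitivity, the back condition, and the truth lemma intact for general $n$) is exactly the missing work. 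As it stands, your proposal proves (1), (2) and soundness in (3), but only conjectures completeness.
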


The cases $n=0,1$ were described in detail in  \citep{gold:moda19}. K4$\C_0$ is equal to the  G\"odel-L\"ob modal logic
of provability, the smallest normal logic to contain the  L\"ob axiom 
$\bo(\bo \ph\to \ph)\to\bo \ph$. It is characterised by validity in all finite transitive frames that are irreflexive, i.e.\ all clusters are degenerate. This was first shown by \cite{sege:essa71}, and there have been a number of other proofs since (see \citealt[Chapter 5]{bool:logi93}).
 K4$\C_1$ is equal to the logic  K4Grz$_\Box$, where Grz$_\Box$ is the axiom
$$
\bo(\bo(p\to\bo p)\to p)\to \bo p.
$$
This logic was shown by \cite{gabe:topo04} to be characterised by validity in all finite frames whose clusters are all singletons (which may individually  be arbitrarily degenerate or simple).

Parts (1) and (3) of Theorem \ref{K4Cnfmp} imply that K4$\C_n$ is a \emph{subframe logic} in the sense of \cite{fine:logi85}
that it is complete for validity in its class of validating Kripke frames and this class is closed under subframes,

Part (2) of  of Theorem \ref{K4Cnfmp} states that validity of $\C_n$ exactly expresses the property of having circumference at most $n$ over the class of \emph{finite} transitive frames. But for transitive frames in general, there is no such formula, or even set of formulas:

\begin{theorem}
There does not exist any set of modal formulas whose conjoint validity in any transitive frame exactly captures the property of having circumference at most $n\geq 0$. 
\end{theorem}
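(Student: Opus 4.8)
The plan is to argue by contradiction using the standard fact that validity of modal formulas is preserved under surjective bounded morphisms (p-morphisms). Suppose some set $\Gamma$ of formulas had the property that for every transitive frame $\F$, $\F\models\Gamma$ iff $\F$ has circumference at most $n$. Recall from the discussion above that in a transitive frame, circumference at most $n$ is equivalent to every non-degenerate cluster having at most $n$ members; in particular a single non-degenerate cluster on $n+1$ points has circumference exactly $n+1$. The strategy is then to exhibit one transitive frame of circumference at most $n$ that maps by a surjective bounded morphism onto a transitive frame of circumference $n+1$. Since the domain validates $\Gamma$ and validity transfers forward along surjective bounded morphisms, the image would validate $\Gamma$ as well, contradicting the assumed characterisation.

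Concretely, I would take $\F_0=(\mathbb{N},<)$, the natural numbers under their strict order. This relation is transitive and irreflexive, so $\F_0$ has no cycles and hence circumference $0\le n$, whence $\F_0\models\Gamma$. For the image I would take $C$, the single non-degenerate cluster on $\{0,1,\dots,n\}$, i.e.\ the frame whose relation is the universal (reflexive) relation $C\times C$; it is transitive and has circumference $n+1$, so $C\not\models\Gamma$. Define $f\colon\mathbb{N}\to\{0,\dots,n\}$ by $f(m)=m\bmod(n+1)$. I would then check that $f$ is a surjective bounded morphism: the \emph{forth} condition is immediate because the target relation is universal, so $m<m'$ always gives $f(m)$ related to $f(m')$ (the cluster being non-degenerate takes care of the case $f(m)=f(m')$); the \emph{back} condition holds because for any $m$ and any residue $z$ there are infinitely many $m'>m$ with $f(m')=z$; and $f$ is plainly onto.

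With $f$ in hand the argument closes immediately: $\F_0\models\Gamma$, $f$ is a surjective bounded morphism onto $C$, and preservation of validity under such maps (see \citep{blac:moda01}) gives $C\models\Gamma$, contradicting $C\not\models\Gamma$. I do not anticipate a genuine obstacle: the only verifications are the two bounded-morphism conditions, each reducing to the universality of the relation on $C$ and the fact that every residue class is cofinal in $(\mathbb{N},<)$; and since $C$ is visibly transitive there is no need to invoke the general fact that bounded morphic images of transitive frames remain transitive, so the restriction to transitive frames causes no trouble. The conceptual point is simply that forbidding large clusters is destroyed when a cluster is ``unravelled'' into an infinite irreflexive chain, a distinction no collection of modally definable conditions can detect. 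Note finally that the single frame $\F_0$ serves for every $n\ge 0$ at once, so the construction is uniform in $n$.
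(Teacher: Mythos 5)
Your proposal is correct and is essentially the paper's own proof: the paper likewise maps $(\omega,<)$ onto a single non-degenerate $(n+1)$-element cluster via $x\mapsto x\bmod(n+1)$ and invokes preservation of validity under surjective bounded morphisms to derive the contradiction. Your verification of the forth and back conditions (universality of the cluster relation, cofinality of residue classes) fills in details the paper leaves to the cited reference, so nothing is missing.
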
 
\begin{proof}
Let $\F_\om$ be the transitive irreflexive frame $(\om,<)$, and for each $1\leq m<\om$ let $\F_m$ be the frame consisting of a single non-degenerate $m$-element cluster on the set $\{0,\dots,m-1\}$. Let $f_m:\F_\om\to\F_m$ be the map 
$f_m(x)=x\bmod m$. Then $f_m$ is a surjective bounded morphism, a type of map that preserves validity of modal formulas in frames \cite[Theorem 3.14]{blac:moda01}. Hence any formula valid in $\F_\om$ will be valid in $\F_m$ for every $m\geq 1$.

Now suppose, for the sake of contradiction, that there does exist a set  $\Phi_n$ of formulas that is valid in a transitive frame iff that frame has circumference at most $n$. Then 
$\F_\om\models\Phi_n$, since $\F_\om$ has circumference 0, and therefore has circumference at most $n$, what ever $n\geq 0$ is chosen. But then by the previous paragraph, 
 $\F_{n+1}\models\Phi_n$, which contradicts the definition of $\Phi_n$ because  $\F_{n+1}$ has circumference $n+1$.
 \qed
\end{proof}

\section{Topological Semantics}

We first review some of the topological ideas that will be used to interpret modal formulas.
These can be found in many textbooks, such as \citep{will:gene70} and \citep{enge:gene77}.

 If $X$ is a topological space, we do not introduce a symbol for the topology of $X$, but simply refer to various subsets as being open or closed \emph{in $X$}.
If $x\in X$, then an \emph{open neighbourhood of $x$} is any open set that contains $x$. A subset of $X$ \emph{intersects} another subset if the two subsets have non-empty intersection. If $Y\sub X$, then $x$ is a \emph{closure point} of $Y$ if every open neighbourhood of $x$ intersects  $Y$. The set of all closure points of $Y$ is the \emph{closure} of $Y$, denoted $\cl_X Y$. It is the smallest closed superset of $Y$ in $X$. It has $Y\sub\cl_X Y=\cl_X \cl_X Y$.
A useful fact is that if $O$ is open in $X$, then $O\cap\cl_X Y\sub\cl_X(O\cap Y)$.
$Y$ is called \emph{dense in $X$} when $\cl_X Y=X$. This means that every non-empty open set intersects $Y$.

We write $\int_X Y$ for the interior of $Y$ in $X$, the largest open subset of $Y$. Thus $x\in \int_X Y$ iff some open neighbourhood of $x$ is included in $Y$.

Any subset $S$ of $X$ becomes a \emph{subspace} of $X$ under the topology whose open sets are all sets of the form $S\cap O$ with $O$ open in $X$. The closure operator $\cl_S$ of the subspace $S$ satisfies $\cl_S Y=S\cap \cl_X Y$. Hence a set $Y\sub S$ is dense in $S$, i.e.\ $\cl_S Y=S$, iff $S\sub\cl_X Y$.

A \emph{punctured neighbourhood of $x$} is any set of the form $O-\{x\}$ with $O$ an open neighbourhood of $x$.  If $Y\sub X$, then $x$ is a \emph{limit point} of $Y$ in $X$  if every punctured neighbourhood of $x$ intersects  $Y$.
The set of all limit points of $Y$ is the \emph{derived set} of $Y$, which we denote $\de_X Y$. (Other notations are $Y'$, $d Y$, and $\langle d\rangle Y$.)
In general, $\cl_XY=Y\cup\de_XY$.
 Another useful fact is that if $O$ is open in $X$, then $O\cap\de_X Y\sub\de_X(O\cap Y)$. If $S$ is a subspace of $X$, then any $Y\sub S$ has $\de_S Y=S\cap \de_X Y$.
 
 A \emph{T$_D$ space} is one in which the derived set $\de_X\{x\}$ of any singleton is closed, which is equivalent to requiring that  $\de_X\de_X\{x\}\sub\de_X\{x\}$.
 This in turn is equivalent to the requirement that any derived set $\de_X Y$ is closed, i.e.\  that $\de_X\de_X Y\sub \de_X Y$ for all $Y\sub X$  \cite[Theorem 5.1]{aull:sepa62}. The T$_D$ property is strictly weaker than the T$_1$ separation property that
 $\de_X\{x\}=\emptyset$ in general, which is itself equivalent to the requirement that any singleton is closed, and to the requirement that any two distinct points each have an open neighbourhood that excludes the other point.
 The simplest example of a non-T$_D$ space  is a two-element  space  $X=\{x,y\}$ with the \emph{indiscrete} (or \emph{trivial}\kern1pt) topology in which only $X$ and $\emptyset$ are open. It has $\de_X\{x\}=\{y\}$ and 
 $\de_X\{y\}=\{x\}$.
 A useful known fact is
 
 \begin{lemma}  \label{TDchar}
A space $X$ is T$_D$ iff it has $x\notin\cl_X\de_X\{x\}$ for all $x\in X$.
\end{lemma}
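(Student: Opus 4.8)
The plan is to reduce the stated condition to a statement about iterated derived sets and then verify the two implications separately. First I would use the general identity $\cl_X Y = Y \cup \de_X Y$ with $Y = \de_X\{x\}$ to write $\cl_X\de_X\{x\} = \de_X\{x\} \cup \de_X\de_X\{x\}$. Since no punctured neighbourhood of $x$ can meet $\{x\}$, the point $x$ is never a limit point of $\{x\}$, so $x \notin \de_X\{x\}$ for every $x$. Consequently the condition $x \notin \cl_X\de_X\{x\}$ is equivalent to $x \notin \de_X\de_X\{x\}$, and the whole lemma reduces to showing that $X$ is T$_D$ iff $x \notin \de_X\de_X\{x\}$ for all $x$.

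For the forward direction I would invoke the characterisation of T$_D$ recorded above, namely $\de_X\de_X\{x\} \subseteq \de_X\{x\}$ for all $x$. Combined with $x \notin \de_X\{x\}$ this immediately yields $x \notin \de_X\de_X\{x\}$, as required.

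The hard part is the converse, where from the pointwise hypothesis $x \notin \de_X\de_X\{x\}$ I must recover the inclusion $\de_X\de_X\{y\} \subseteq \de_X\{y\}$ for every $y$. I would take $z \in \de_X\de_X\{y\}$ and show $z \in \de_X\{y\}$ by a neighbourhood chase. Given any open neighbourhood $O$ of $z$, the set $O-\{z\}$ is a punctured neighbourhood of $z$, so it meets $\de_X\{y\}$; pick $w \in (O-\{z\}) \cap \de_X\{y\}$. Because $w \in \de_X\{y\}$ and $O$ is an open neighbourhood of $w$, the point $y$ must lie in $O$. Thus every open neighbourhood of $z$ contains $y$. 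Finally $z \neq y$, since otherwise $z \in \de_X\de_X\{z\}$, contradicting the hypothesis at $z$. Having $z \neq y$ together with $y$ in every open neighbourhood of $z$ is precisely the statement $z \in \de_X\{y\}$, completing the proof. I expect the only subtlety to be remembering to establish $z \neq y$ separately, since the neighbourhood chase alone delivers only that $y$ lies in every open neighbourhood of $z$.
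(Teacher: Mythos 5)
Your proof is correct, but your converse takes a genuinely different route from the paper's. The paper's converse never passes through the second derived set: it notes that $\cl_X\de_X\{x\}\sub\cl_X\{x\}=\{x\}\cup\de_X\{x\}$ by monotonicity of closure, so once the hypothesis removes $x$ from $\cl_X\de_X\{x\}$ one gets $\cl_X\de_X\{x\}\sub\de_X\{x\}$ outright, i.e.\ $\de_X\{x\}$ is closed --- a two-line containment argument. You instead reduce the hypothesis to $x\notin\de_X\de_X\{x\}$ (a valid reduction, via $\cl_X Y=Y\cup\de_X Y$ and $x\notin\de_X\{x\}$) and then establish $\de_X\de_X\{y\}\sub\de_X\{y\}$ by a pointwise chase through punctured neighbourhoods with an intermediate witness $w$. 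The chase is sound: from $w\in\de_X\{y\}$ and $w\in O$ open, the punctured neighbourhood $O-\{w\}$ must contain $y$, so $y\in O$; and you correctly identified the one real subtlety, that $z\ne y$ does not fall out of the chase but must be extracted by applying the pointwise hypothesis at $z$ itself --- this matters because your hypothesis speaks only of each point and its own iterated derived set, while the conclusion quantifies over pairs $(z,y)$. What your route buys is a fully first-principles argument that also makes explicit the equivalence of the lemma's condition with $x\notin\de_X\de_X\{x\}$, connecting it to the characterisation $\de_X\de_X\{x\}\sub\de_X\{x\}$ of T$_D$ that the paper quotes from Aull and Thron; what the paper's route buys is brevity, since the monotonicity trick sidesteps the pointwise bookkeeping (including the $z\ne y$ issue) entirely.
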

 \begin{proof}
Since $x\notin\de_X\{x\}$ in general, if $X$ is T$_D$ then $x\notin\cl_X\de_X\{x\}$ as  $\cl_X\de_X\{x\}=\de_X\{x\}$.
Conversely, since $\cl_X\{x\}$ is a closed superset of  $\de_X\{x\}$, it includes $\cl_X\de_X\{x\}$, so we have
$
\cl\nolimits_X\de\nolimits_X\{x\} \sub \cl\nolimits_X\{x\}= \de\nolimits_X\{x\}\cup\{x\}.
$
Thus if  $x$ is not in $\cl_X\de_X\{x\}$, then $\cl_X\de_X\{x\}\sub\de_X\{x\}$, hence  $\de_X\{x\}$ is closed.
\qed
\end{proof}

We now review the topological semantics for modal logics \citep{bezh:some05}.
 A \emph{topological model} $\M=(X,V)$ on a space $X$ has a valuation $V$ assigning a subset of $X$ to each propositional variable. A truth set $\M_d(\ph)$ is then defined by induction on the formation of  $\ph$ by letting $\M_d (p)=V(p)$,  interpreting the Boolean connectives by the corresponding Boolean set operations, and putting 
 $\M_d(\di\ph)=\de_X(\M_d(\ph))$, the set of limit points of $\M_d(\ph)$. Then  $\M_d(\bo\ph)$ is determined by the requirement that it be equal to $\M_d(\neg\di\neg\ph)$. This gives
 \begin{itemize}
\item $x\in \M_d(\di\ph)$ iff every punctured neighbourhood of $x$ intersects $ \M_d(\ph)$;

\item $x\in \M_d(\bo\ph)$ iff there is a punctured neighbourhood  of $x$ included in $ \M_d(\ph)$.
\end{itemize}

A formula $\ph$ is \emph{$d$-true in $\M$}, written $\M\models_d\ph$, if $\M_d(\ph)=X$; and is \emph{d-valid in space $X$}, written $X\models_d\ph$, if it is $d$-true  in all models on $X$. 
The set $\{\ph:X\models_d\ph\}$ of all formulas that are $d$-valid in $X$ is a normal logic, called the \emph{$d$-logic of $X$}. It need not be a transitive logic, because the scheme 4 is $d$-valid in $X$ iff $X$ is $T_D$ \citep{esak:weak01,esak:intu04}.

A point $x$ is \emph{isolated} in a space  $X$  if $\{x\}$ is open in $X$. If $X$ has no isolated points, it  is  \emph{crowded}  (also called \emph{dense-in-itself}). This means that every point is a limit point of $X$, i.e.\ $\de_X X=X$. Thus $X$ is crowded iff $X\models_d\di\top$.
 A subset $S$ is called \emph{crowded in $X$} if it is crowded as a subspace, i.e.\ $\de_S S=S$, which is equivalent to requiring that $S\sub\de_X S$, and hence that $\cl_X S=\de_X S$, since $\cl_X S=S\cup\de_X S$. The following is standard.
 
 \begin{lemma} \label{densecrowded}
Let $O$ and $S$ be subsets of $X$ with $O$ open. If $S$ is dense in $X$ then $O\cap S$ is dense in $O$, and if $S$ is crowded in $X$ then $O\cap S$ is crowded in $O$.
\end{lemma}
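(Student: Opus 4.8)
The plan is to derive both halves directly from the subspace identities recorded above together with the two ``useful facts'' that an open set distributes into a closure or a derived set. Specifically, I would lean on $\cl_S Y = S\cap\cl_X Y$ and $\de_S Y = S\cap\de_X Y$ for a subspace $S$, on the reformulation that $Y\sub S$ is dense in $S$ iff $S\sub\cl_X Y$, and on the inclusions $O\cap\cl_X Y\sub\cl_X(O\cap Y)$ and $O\cap\de_X Y\sub\de_X(O\cap Y)$ that hold for open $O$. Each claim then collapses to a one-line chain of inclusions.

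For the density claim, I would take $O\cap S$ as the candidate dense subset of the subspace $O$. By the density reformulation it suffices to show $O\sub\cl_X(O\cap S)$. Since $S$ is dense in $X$ we have $\cl_X S = X$, so the open-set inclusion gives $O = O\cap\cl_X S\sub\cl_X(O\cap S)$, which is exactly what is wanted. Equivalently, $\cl_O(O\cap S)=O\cap\cl_X(O\cap S)\supseteq O\cap O = O$, while the reverse inclusion is automatic.

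For the crowdedness claim, I would use that $S$ crowded in $X$ means $S\sub\de_X S$, and that $O\cap S$ crowded in $O$ means $O\cap S\sub\de_O(O\cap S)$. Fixing $x\in O\cap S$, crowdedness of $S$ gives $x\in\de_X S$, so $x\in O\cap\de_X S\sub\de_X(O\cap S)$ by the open-set inclusion; combined with $x\in O$ this yields $x\in O\cap\de_X(O\cap S)=\de_O(O\cap S)$ via the subspace derived-set formula. Hence $O\cap S\sub\de_O(O\cap S)$, as required.

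I do not anticipate a genuine obstacle here, the statement being flagged as standard; the content lies entirely in recognising that the stated open-set distribution facts are precisely the tools needed. The only point demanding care is bookkeeping: using the closure version of the subspace formula for the density half and the derived-set version for the crowdedness half, and tracking the inclusion directions so that the intersections with $O$ land on the correct side.
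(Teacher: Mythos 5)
Your proof is correct and is essentially the paper's own argument: the density half is the identical one-line chain $O=O\cap\cl_X S\sub\cl_X(O\cap S)$, and the crowdedness half is the same inclusion $O\cap S\sub O\cap\de_X S\sub\de_X(O\cap S)$, merely spelled out pointwise and routed through the subspace formula $\de_O(O\cap S)=O\cap\de_X(O\cap S)$, which is equivalent since $O\cap S\sub O$. No gaps; the extra bookkeeping you add is harmless but not needed.
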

 
 \begin{proof}
If $\cl_X S=X$, then $O=O\cap \cl_X S\sub\cl_X( O\cap  S)$, showing that $O\cap S$ is dense in $O$.
If   $S$ is crowded, then $O\cap S\sub O\cap\de_X S\sub \de_X(O\cap S)$, so $O\cap S$ is crowded.
\qed
\end{proof}

We now explain a relationship between $d$-validity and frame validity.
Let $\F=(W,R)$ be a transitive frame. There is an associated \emph{Alexandrov topology} on $W$ in which the open subsets $O$ are  those that are  \emph{up-sets} under $R$, i.e.\ if $w\in O$ and $wRv$ then $v\in O$.
Call the resulting topological space $W_R$. Let $R^*=R\cup\{(w,w):w\in W\}$ be the reflexive closure of $R$. Then $R^*$
is a \emph{quasi-order}, i.e.\  is reflexive and transitive, and
the topology of $W_{R}$ has as a basis the sets $R^*(w)$ for all $w\in W$, where
$
R^*(w)=\{v:wR^* v\}=\{w\}\cup R(w).
$

A \emph{$d$-morphism} from a space $X$ to $\F$ is a function $f:X\to W$ that has the following properties:
\begin{enumerate}[\rm(i)]
\item 
$f$ is a continuous and open function from $X$ to the space $W_{R}$.
\item
If $w\in W$ is  $R$-reflexive, then  the preimage $f^{-1}\{w\}$ is crowded in $X$.
\item
If  $w$ is $R$-irreflexive, then  $f^{-1}\{w\}$ is a discrete subspace of $X$, i.e.\ each point of  $f^{-1}\{w\}$ is isolated in  $f^{-1}\{w\}$, or equivalently $f^{-1}\{w\}\cap\de_X f^{-1}\{w\}=\emptyset$.
\end{enumerate}
In (i), $f$ is continuous when the $f$-preimage of any open subset of $W_R$ is open in $X$, while $f$ is open when the $f$-image of any open subset of $X$ is open in $W_R$.
The importance of this kind of morphism is that a surjective $d$-morphism preserves $d$-validity as frame validity, in the following sense.

\begin{theorem} [{\citealt[Cor.~2.9]{bezh:some05}}]  
\label{dmorph}
If there exists a $d$-morphism from $X$ \emph{onto} $\F$, then for any formula $\ph$,\enspace
$X\models_d\ph$ implies $\F\models\ph$.
\qed
\end{theorem}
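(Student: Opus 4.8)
The plan is to argue contrapositively, transferring a refuting Kripke model on $\F$ to a refuting topological model on $X$ along $f$. Given the surjective $d$-morphism $f:X\to W$ and any relational model $\M=(W,R,V)$ on $\F$, I would define a topological model $\M'=(X,V')$ by pulling back the valuation, $V'(p)=f\inv V(p)$ for each variable $p$. The crux is a \emph{truth lemma},
\[
\M'_d(\ph)=f\inv\M(\ph)\quad\text{for every formula }\ph,
\]
proved by induction on $\ph$. The base case holds by definition of $V'$, and the Boolean cases are immediate because $f\inv$ commutes with complement, intersection, and union. Granting the lemma, if $\F\not\models\ph$ then some $\M$ has a point $w_0\notin\M(\ph)$; by surjectivity choose $x\in f\inv\{w_0\}$, so $x\notin f\inv\M(\ph)=\M'_d(\ph)$, whence $\M'\not\models_d\ph$ and $X\not\models_d\ph$, which is exactly the contrapositive.

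All the work is in the modal step, which reduces to the single set identity
\[
\de_X(f\inv A)=f\inv(R\inv A)\qquad(A\sub W),
\]
applied with $A=\M(\psi)$ for $\ph=\di\psi$, using the induction hypothesis $\M'_d(\psi)=f\inv A$ together with $\M'_d(\di\psi)=\de_X\M'_d(\psi)$ and $\M(\di\psi)=R\inv\M(\psi)$. I would prove this by double inclusion, writing $w=f(x)$ and $B=f\inv A$. For $\supseteq$: if $w\in R\inv A$ there is $v\in R(w)\cap A$, and for any open neighbourhood $O$ of $x$, openness of $f$ makes $f(O)$ an up-set containing $w$, so $v\in f(O)$ and $O$ meets $f\inv\{v\}\sub B$; when $v\ne w$ this witness differs from $x$, and when $v=w$ the point $w$ is reflexive with $w\in A$, so clause (ii) (crowdedness of $f\inv\{w\}$) supplies a witness in $(O\setminus\{x\})\cap f\inv\{w\}\sub B$. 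Hence $x\in\de_X B$. For $\subseteq$: if $w\notin R\inv A$ then $R(w)\cap A=\emptyset$, and continuity gives the open neighbourhood $O=f\inv(R^*(w))$ of $x$, for which any $x'\in O\cap B$ has $f(x')\in(\{w\}\cup R(w))\cap A=\{w\}\cap A$. If $w\notin A$ then $O\cap B=\emptyset$, so $x\notin\cl_X B\supseteq\de_X B$; if $w\in A$ then $w$ is irreflexive (else $w\in R(w)\cap A$) and $O\cap B\sub f\inv\{w\}$, so clause (iii) (discreteness of $f\inv\{w\}$), giving $x\notin\de_X f\inv\{w\}$, yields a punctured neighbourhood of $x$ inside $O$ missing $B$, whence again $x\notin\de_X B$.

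The only genuine topological content lies in the two ``diagonal'' cases of the modal step, where the relevant successor of $w$ is $w$ itself: these are precisely where the reflexive/irreflexive dichotomy of $\F$ must be matched by the crowded/discrete behaviour of the fibres $f\inv\{w\}$. Reflexive points need crowded fibres so that a $\di$-witness can be found arbitrarily close to $x$ (clause (ii)), while irreflexive points need discrete fibres so that $x$ is not spuriously counted as its own $\di$-witness (clause (iii)); continuity and openness govern the off-diagonal interaction of $x$ with its genuine $R$-successors, and surjectivity is used only at the end to lift $w_0$ back into $X$. I expect the careful bookkeeping of these four cases, rather than any single difficult idea, to be the main obstacle.
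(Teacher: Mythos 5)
Your proof is correct. The paper itself gives no proof of this theorem---it is stated with a citation to \citet[Cor.~2.9]{bezh:some05}---and your argument is essentially the standard one from that source: pull the valuation back along $f$, prove the truth lemma $\M'_d(\ph)=f\inv\M(\ph)$ by induction, and reduce the modal step to the identity $\de_X(f\inv A)=f\inv(R\inv A)$, whose verification you carry out correctly, with the two diagonal cases ($v=w$ reflexive, $w\in A$ irreflexive) invoking clauses (ii) and (iii) exactly as needed, the off-diagonal cases using openness and continuity (note your neighbourhood $f\inv(R^*(w))$ is open because transitivity of $R$ makes $R^*(w)$ an up-set, a standing assumption here), and surjectivity entering only to lift the refuting point.
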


 The interpretation of $\di$ by $\de$ is sometimes called \emph{$d$-semantics} \citep{bezh:some05}. It has
 $\M_d(\di^*\ph)=\cl_X(\M_d(\ph))$ and  $\M_d(\bo^*\ph)=\int_X(\M_d(\ph))$,  because $\cl_X Y=Y\cup\de_X Y$. 
 By contrast, \emph{$C$-semantics} interprets $\di$ as $\cl$, defining truth sets $\M_C(\ph)$ inductively  in a topological model $\M$ by putting $\M_C(\di\ph)=\cl_X(\M_C(\ph))$ and $\M_C(\bo\ph)=\int_X(\M_C(\ph))$. A formula $\ph$ is \emph{$C$-valid in $X$}, written $X\models_C\ph$, iff 
$\M_C(\ph)=X$ for all models $\M$ on $X$.
 In $C$-semantics there is no distinction in interpretation between $\di$ and $\di^*$, or between $\bo$ and $\bo^*$.
 
 A space is \emph{scattered} if each of its non-empty subspaces has an isolated point, i.e.\ no non-empty subset is crowded. This condition $d$-validates the L\"ob axiom, and hence $d$-validates the logic K4$\C_0$, since it is equal to the G\"odel-L\"ob logic. In fact K4$\C_0$ is characterised by $d$-validity in all scattered spaces, a result due to \cite{esak:diag81}. It can be readily explained via the relational semantics. If $\ph$ is a non-theorem of K4$\C_0$, then $\ph$ fails to be valid in some frame $(W,R)$ with $W$ finite and $R$ irreflexive and transitive. In such a frame, $R^{-1}Y$ is the derived set of $Y$ in the Alexandrov space $W_R$, for any $Y\sub W$. This implies that the relational semantics on $(W,R)$ agrees with the $d$-semantics on $W_R$, in the sense that  a formula is true at a point $w$ in a relational model $(W,R,V)$ iff it is $d$-true at $w$ in the topological model $(W_R,V)$. Hence $\ph$ is not $d$-valid in the space $W_R$. But $W_R$ is scattered, since for any non-empty $Y\sub W$ there is an $R$-maximal element $w\in Y$, i.e.\ $wRv$ implies $v\notin Y$, hence $R^*(w)$ is an open neighbourhood of $w$ in $W_R$ that contains no member of $Y$ other than $w$, making $w$ isolated in $Y$.
 
 From now on we focus on the logics K4$\C_n$ with $n\geq 1$.

\section{Hereditary Irresolvability}

A \emph{partition} of a space $X$ is, as usual, a collection of non-empty subsets of $X$, called the \emph{cells}, that are pairwise disjoint and whose union is $X$. It is a \emph{$k$-partition}, where $k$ is a positive integer, if it has exactly $k$ cells. A \emph{dense partition} is one  for which each cell is  dense in $X$. A \emph{crowded partition} is one whose  cells are  crowded.

For $k\geq 2$, a  space is called \emph{$k$-resolvable} if it has $k$ pairwise disjoint non-empty dense subsets.  Since any superset of a dense set is dense, $k$-resolvability is equivalent to $X$ having a dense $k$-partition. $X$ is \emph{$k$-irresolvable} if it is not $k$-resolvable. It is  \emph{hereditarily $k$-irresolvable}, which may be abbreviated to $k$-HI, if every non-empty subspace of $X$ is $k$-irresolvable. 
Note that if $k\leq n$, then an $n$-resolvable space is also $k$-resolvable, since we can amalgamate cells of a dense partition to form new dense partitions with fewer cells. Hence if $X$ is $k$-HI, then it is also $n$-HI.

$k$-resolvability was defined by \cite{cede:maxi64} with the extra requirement that each cell of a dense $k$-partition should intersect each non-empty open set in at least $k$ points. That requirement was dropped by later authors, including \cite{elki:reso69} and \cite{ecke:reso97}, with the latter defining the $k$-HI notion.

The prefix $k$- is usually omitted when $k=2$. Thus a space is \emph{resolvable} if it has a disjoint pair of non-empty dense subsets, and is  \emph{hereditarily irresolvable}, or HI, if it has no non-empty subspace that is resolvable.

It is known that any HI space is T$_D$. For convenience we repeat an explanation of this from \citep{gold:moda19}.
In general  $\cl_X\{x\}=\{x\}\cup\de_X\{x\}$ with $x\notin \de_X\{x\}$ and $\{x\}$  dense in $\cl_X\{x\}$, while
 $\cl_X\de_X\{x\}\sub \cl_X\{x\}$.  
 But if $X$ is HI, then $\cl_X\{x\}$ is irresolvable, so $\de_X\{x\}$ cannot be dense in $\cl_X\{x\}$, hence  $\cl_X\de_X\{x\}$ can only be  $\de_X\{x\}$, i.e.\  $\de_X\{x\}$ is closed.

On the other hand, a $k$-HI space need not be T$_D$ when $k>2$. For instance, we saw that a two-element indiscrete space is not T$_D$, but since it has no 3-partition it is $k$-HI for every $k>2$.

It is also known that every scattered space is HI.  For, in a scattered space any non-empty subspace $Y$ has an isolated point which will belong to one cell of any 2-partition of $Y$ and prevent the other cell from being dense, hence prevent $Y$ from being resolvable. It follows that every scattered space is T$_D$. This is a topological manifestation of the celebrated proof-theoretic fact that the transitivity axiom 4 is derivable from L\"ob's axiom over K (see \citealt[p.~11]{bool:logi93}).

In \citep{gold:moda19}, the following results were proved for all $n\geq 1$, where S4$\C_n$ is the smallest normal extension of K4$\C_n$ that includes the scheme $\bo\ph\to\ph$, or equivalently $\ph\to\di\ph$.

\begin{enumerate}
\item 
A  space $X$ has $X\models_d\C_n$  iff $X\models_C\C_n$
iff $X$ is hereditarily $n+1$-irresolvable.

\item
If $(W,R)$ is a finite quasi-order, then it has circumference at most $n$ iff the space $W_R$ is hereditarily $n+1$-irresolvable.

\item
S4$\C_n$ is characterised by $C$-validity in all  hereditarily $n+1$-irresolvable spaces, i.e.\
a formula is a theorem of S4$\C_n$ iff it is $C$-valid in all  hereditarily $n+1$-irresolvable spaces. Moreover, S4$\C_n$ is characterised by $C$-validity in all \emph{finite} hereditarily $n+1$-irresolvable spaces.

\item
K4$\C_n$ is not characterised by $d$-validity in any class of finite spaces.
\end{enumerate}
The reason for the last result is that every finite space that $d$-validates K4 is scattered and so
$d$-validates the  L\"ob axiom, which is not a theorem of K4$\C_n$ when $n\geq 1$.

Using the first result listed above we can  infer that  K4$\C_n$ is sound for 
$d$-validity in all hereditarily $n+1$-irresolvable T$_D$ spaces.
The main result of this paper is that, conversely,  K4$\C_n$ is complete for $d$-validity in all hereditarily $n+1$-irreducible T$_D$ spaces (albeit not for $d$-validity in all the finite ones). To indicate how this will be proved, note that by Theorem \ref{K4Cnfmp} we have that K4$\C_n$ is complete for  $d$-validity in all finite frames that validate K4$\C_n$. So to show that  K4$\C_n$ is complete for $d$-validity in some class of spaces, it suffices by Theorem \ref{dmorph}  to show that every finite  K4$\C_n$-frame is a $d$-morphic image of some  space in that class.
For $n=1$ this was done by \cite{gabe:topo04} (see also \citealt{bezh:k4gr10}), proving that K4$\C_1$ (in the form K4Grz$_\Box$) is the $d$-logic of HI-spaces by showing that any finite  K4$\C_1$-frame is a $d$-morphic image of  an HI space. A finite  K4$\C_1$-frame has only singleton clusters, and each non-degenerate one was replaced by an HI space to construct the desired HI preimage. We will now generalise this construction to make it work for all $n> 1$ as well.

Suppose that $\F=(W,R)$ is finite and transitive.
Let $\CC$ be the set of $R$-clusters of $\F$. 
We define a collection $\{X_C:C\in\CC\}$ of spaces, with each $X_C$ having a partition  
$\{X_w:w\in C\}$ indexed by $C$.
If $C=\{w\}$ is a degenerate cluster, put $X_C=X_w=\{w\}$ as a one-point space. For $C$ non-degenerate, with
 $C=\{w_1,\dots,w_k\}$ for some positive integer $k$, take $X_C$ to be 
a copy of a  space that has a crowded dense $k$-partition.  Label the cells of that partition  $X_{w_1},\dots,X_{w_k}$.
We take  $X_C$ to be disjoint from $X_{C'}$ whenever $C\ne C'$ (replacing spaces by homeomorphic copies where necessary to achieve this). That completes the definition of the $X_C$'s and the $X_w$'s.

Now let $X_\F=\bigcup\{X_C:C\in\CC\}$, and
define a surjective map $f:X_\F\to W$ by putting $f(x)=w$ iff $x\in X_w$. This entails that  $f^{-1}\{w\}=X_w$ and   $f^{-1}C=X_C$.

For $C,C'\in\CC$, write $C R\up C'$    if $CRC'$ but not $C'RC$, i.e.\ $C'$ is a strict $R$-successor of $C$. Define a subset $O\sub X_\F$ to be \emph{open} iff for all $C\in\CC$, $O\cap X_C$ is open in $X_C$ and 
\begin{equation*}
\text{
if $O\cap X_C\ne\emptyset$, then
for all $C'$ such that $CR\up C'$,\, $X_{C'}\sub O$. }
\end{equation*}
It is readily checked that these open sets form  a topology on $X_\F$. If $B$ is an open subset of $X_C$, then
$O_B=B\cup\bigcup\{X_{C'}:C R\up C'\}$ is an open subset of $X_\F$ (this uses transitivity of $R\up$), with $O_B\cap X_C=B$. It follows that $X_C$ is a subspace of $X_\F$, i.e.\ the original topology of $X_C$ is identical to the subspace topology on the underlying set of $X_C$ inherited from the topology of  $X_\F$.

\begin{lemma} \label{lemdmorph}
$f$ is a $d$-morphism from $X_\F$ onto $\F$.
\end{lemma}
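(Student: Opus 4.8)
The plan is to verify directly the three defining conditions (i)--(iii) of a $d$-morphism from $X_\F$ to $\F$, noting that $f$ is onto by construction. Conditions (ii) and (iii) are quickly dispatched, while the continuity and openness demanded by (i) require more care, openness being the crux.

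I would first handle (ii). Let $w$ be $R$-reflexive, so its cluster $C=C_w$ is non-degenerate and $X_w$ is one cell of the crowded dense $k$-partition of $X_C$; thus $X_w$ is crowded in the subspace $X_C$, i.e.\ $X_w\sub\de_{X_C}X_w$. Since $X_C$ is a subspace of $X_\F$, the identity $\de_{X_C}X_w=X_C\cap\de_{X_\F}X_w$ yields $X_w\sub\de_{X_\F}X_w$, so $f^{-1}\{w\}=X_w$ is crowded in $X_\F$. For (iii), if $w$ is $R$-irreflexive then $C_w=\{w\}$ is degenerate and $X_w$ is a one-point space; since no point is one of its own limit points, $X_w\cap\de_{X_\F}X_w=\emptyset$, so $X_w$ is a discrete subspace.

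For the continuity half of (i), it suffices to check that the preimage of each basic open set $R^*(w)$ of $W_R$ is open. Writing $C=C_w$ and analysing $R^*(w)=\{w\}\cup R(w)$ in terms of clusters, distinguishing whether $w$ is reflexive or irreflexive, one finds in both cases that $f^{-1}(R^*(w))=X_C\cup\bigcup\{X_{C'}:C R\up C'\}$. This is precisely the set $O_B$ for $B=X_C$, already known to be open in $X_\F$; hence $f$ is continuous.

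The main obstacle is openness: I must show that $f(O)=\{w:O\cap X_w\ne\emptyset\}$ is an $R$-up-set whenever $O$ is open in $X_\F$. Suppose $O\cap X_w\ne\emptyset$ and $wRv$; the goal is $O\cap X_v\ne\emptyset$. If $v$ lies in the same cluster $C=C_w$, then $X_v$ is dense in $X_C$ while $O\cap X_C$ is a non-empty open subset of $X_C$, so $O\cap X_C$ must meet $X_v$, giving $O\cap X_v\ne\emptyset$; this within-cluster case is exactly where the density of the partition cells is essential. If instead $v\notin C$, then $C R\up C_v$, and since $O\cap X_C\ne\emptyset$ the defining condition for openness of $O$ forces $X_{C_v}\sub O$, so $X_v\sub O$ and again $O\cap X_v\ne\emptyset$. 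Thus $f(O)$ is an up-set, completing openness. Everything apart from this within-cluster density argument is routine bookkeeping with the topology of $X_\F$ and the identity $\de_S Y=S\cap\de_X Y$ for subspaces.
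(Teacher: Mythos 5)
Your proof is correct and takes essentially the same route as the paper's: continuity checked on the basic open sets $R^*(w)$ whose preimage is $X_C\cup\bigcup\{X_{C'}:C R\up C'\}$, openness via the same two-case up-set argument (density of the cell $X_v$ in $X_C$ within a cluster, and the condition $X_{C'}\sub O$ forced by openness of $O$ across clusters), and the subspace identity $\de_{X_C}Y=X_C\cap\de_{X_\F}Y$ for the crowded/discrete preimage conditions. There are no gaps; your within-cluster case implicitly uses that $v\in C_w$ with $wRv$ forces $C_w$ non-degenerate, which is immediate from transitivity, just as in the paper.
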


\begin{proof}
To show $f$ is continuous it is enough to show that the preimage $f\inv R^*\{w\}$ of any basic open subset of $W_{R^*}$ is open in $X_\F$. If $C$ is the $R$-cluster of $w$, then $R^*\{w\}=C\cup\bigcup\{{C'}:C R\up C'\}$, so
$$\textstyle
f\inv R^*\{w\}= f\inv C\cup \bigcup\{f\inv {C'}:C R\up C'\}
=X_C\cup \bigcup\{X_{C'}:C R\up C'\},
$$
which is indeed open in $X_\F$.

To show that $f$ is an open map, we must show that if $O$ is an open subset of $X_\F$, then $f(O)$ is open in $W_{R^*}$, i.e.\ is  an $R$-up-set.
So, suppose  $w\in f(O)$ and $wRv$. We want $v\in f(O)$.
Let $C$ be the cluster of $w$. We have $w=f(x)$ for some $x\in O\cap X_C$. If $v\in C$, then $C$ is non-degenerate and  $X_v$ is dense in $X_C$, so as $ O\cap X_C$ is open in $X_C$, there is some $y\in  X_v\cap O$. Then $v=f(y)\in f(O)$.
If however $v\notin C$, then the cluster $C'$ of $v$ has $CR\up C'$, hence  $X_{C'}\sub O$. Taking any $y\in X_v\sub X_{C'}$ gives   $v=f(y)\in f(O)$ again. That completes the proof that $f(O)$ is an $R$-up-set.

If $w\in C$ is reflexive, then $f\inv\{w\}=X_w$ is crowded in $X_C$, i.e.\ $f\inv\{w\}\sub \de_{X_C}f\inv\{w\}$. 
But $\de_{X_C}f\inv\{w\}\sub \de_{X_\F}f\inv\{w\}$,
since $X_C$ is a subspace of $X_\F$, so $f\inv\{w\}=X_w$ is crowded in $X_\F$.

Finally,  if $w$ is irreflexive, then  $f\inv\{w\}=\{w\}$ is discrete in $X_\F$.
\qed
\end{proof}

\begin{lemma}    \label{lemTD}
If $X_C$ is T$_D$ for all $C\in\CC$, then $X_\F$ is T$_D$.
\end{lemma}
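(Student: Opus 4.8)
The plan is to apply Lemma~\ref{TDchar}: it suffices to show that $x\notin\cl_{X_\F}\de_{X_\F}\{x\}$ for every $x\in X_\F$, equivalently to produce an open neighbourhood of $x$ disjoint from $\de_{X_\F}\{x\}$. Fix $x$ and let $C$ be the cluster in $\CC$ with $x\in X_C$.

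First I would compute the derived set explicitly, establishing
\[
\de_{X_\F}\{x\}=\de_{X_C}\{x\}\cup\bigcup\{X_{C'}:C'R\up C\}.
\]
The part lying inside $X_C$ is handled by the subspace identity $\de_{X_C}\{x\}=X_C\cap\de_{X_\F}\{x\}$ recorded in Section~3, since $X_C$ is a subspace of $X_\F$. For the cross-cluster part, take $y\in X_{C'}$ with $C'\ne C$. If $C'R\up C$, then any open $O\ni y$ meets $X_{C'}$, so by the defining condition for openness in $X_\F$ we get $X_C\sub O$, whence $x\in O$; thus every punctured neighbourhood of $y$ contains $x$ and $y\in\de_{X_\F}\{x\}$. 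Conversely, if not $C'R\up C$, then the open set $O_{B'}=B'\cup\bigcup\{X_{C''}:C'R\up C''\}$ built from any open $B'\ni y$ in $X_{C'}$ omits $X_C$, hence omits $x$, so $y\notin\de_{X_\F}\{x\}$. This cross-cluster analysis, pinning down exactly which points of other clusters are limit points of $\{x\}$, is the step I expect to require the most care, since it must invoke the topology of $X_\F$ in both directions.

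With the derived set in hand I would finish using the hypothesis. Since $X_C$ is T$_D$, Lemma~\ref{TDchar} applied inside $X_C$ yields an open $B$ in $X_C$ with $x\in B$ and $B\cap\de_{X_C}\{x\}=\emptyset$. Lift it to the open set $O_B=B\cup\bigcup\{X_{C'}:CR\up C'\}$ of $X_\F$, which has $O_B\cap X_C=B$. Then $O_B$ misses $\de_{X_C}\{x\}$, because its trace on $X_C$ is $B$; and $O_B$ misses $\bigcup\{X_{C'}:C'R\up C\}$, because the only clusters other than $C$ that meet $O_B$ are strict successors of $C$, whereas no cluster can be simultaneously a strict successor and a strict predecessor of $C$ (that would force $CR\up C'R\up C$, contradicting antisymmetry of the cluster ordering). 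Hence $O_B\cap\de_{X_\F}\{x\}=\emptyset$, so $x\notin\cl_{X_\F}\de_{X_\F}\{x\}$, and $X_\F$ is T$_D$. Degenerate clusters, where $X_C=\{x\}$ is trivially T$_D$, are covered by the same argument.
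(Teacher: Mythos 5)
Your proof is correct and follows essentially the same route as the paper's: both invoke Lemma~\ref{TDchar}, use the subspace identity $\de_{X_C}\{x\}=X_C\cap\de_{X_\F}\{x\}$, and exhibit the very same open neighbourhood $O_B=B\cup\bigcup\{X_{C'}:CR\up C'\}$ as disjoint from $\de_{X_\F}\{x\}$. The only difference is that you first compute $\de_{X_\F}\{x\}$ in full, whereas the paper gets by with the lighter observation that the open set $\bigcup\{X_{C'}:CR\up C'\}$ omits $x$ and hence contains no limit points of $\{x\}$ --- your extra cross-cluster analysis is sound but not needed.
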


\begin{proof}
By Lemma \ref{TDchar}, a space $X$ is T$_D$ iff it has $x\notin \cl_X\de_X\{x\}$ in general. If $x\in X_\F$, then $x\in X_C$ for some $C$. If $X_C$ is T$_D$, then there is an open neighbourhood $O$ of $x$ in $X_C$ that is disjoint from $\de_{X_C}\{x\}$. As
 $\de_{X_C}\{x\}=X_C\cap  \de_{X_\F}\{x\}$, $O$ is  disjoint from $\de_{X_\F}\{x\}$.
Let $O'= \bigcup\{X_{C'}:C R\up C'\}$. Then $O'$ is $X_\F$-open with $x\notin O'$, so no point of $O'$ is a limit point of $\{x\}$ in $X_\F$. Hence $O\cup O'$ is an $X_\F$-open neighbourhood of $x$ that is disjoint from $\de_{X_\F}\{x\}$, showing that  $x\notin \cl_{X_\F}\de_{X_\F}\{x\}$.
\qed
\end{proof}
Note that this result need not hold with T$_1$ in place of T$_D$. $X_\F$ need not be T$_1$ even when every $X_C$ is. For if  $CR\up C'$ with $x\in X_C$ and $y\in X_{C'}$, then every open neighbourhood of $x$ in $X_\F$ contains $y$, so $x\in\cl_{X_\F}\{y\}-\{y\}$, showing that $\{y\}$ is not closed. 

\begin{lemma}  \label{lemnHI}
If $X_C$ is $n$-HI for all $C\in\CC$, then $X_\F$ is $n$-HI.
\end{lemma}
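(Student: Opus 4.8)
The plan is to prove the contrapositive by an extremal argument on the cluster order: assuming some non-empty subspace of $X_\F$ admits a dense $n$-partition, I will exhibit a single cluster $C$ whose piece $X_C$ inherits a dense $n$-partition on a non-empty subspace, contradicting the hypothesis that $X_C$ is $n$-HI. So first I would suppose, for contradiction, that $X_\F$ is not $n$-HI, i.e.\ there is a non-empty $S\sub X_\F$ with a dense $n$-partition $D_1,\dots,D_n$ (each $D_i$ dense in $S$, the $D_i$ pairwise disjoint with union $S$). Since $\CC$ is finite and $R\up$ is a strict partial order on clusters, the non-empty set $\{C\in\CC:S\cap X_C\ne\emptyset\}$ has an $R\up$-maximal element $C$; thus $S\cap X_{C'}=\emptyset$ for every $C'$ with $CR\up C'$.

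The key geometric step is to show that $S\cap X_C$ is open in the subspace $S$. Here I would use the set $O=X_C\cup\bigcup\{X_{C'}:CR\up C'\}$, which is open in $X_\F$ (it is precisely the basic open set $f\inv R^*\{w\}$ computed in the proof of Lemma \ref{lemdmorph} for any $w\in C$); by maximality of $C$ its intersection with $S$ collapses to $O\cap S=S\cap X_C$, so this set is a non-empty open subset of $S$. Next I would apply Lemma \ref{densecrowded} inside $S$ (with $S$ as the ambient space, $S\cap X_C$ as the open set, and each $D_i$ as the dense set) to conclude that each $D_i\cap X_C=D_i\cap(S\cap X_C)$ is dense in $S\cap X_C$. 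None of these cells is empty, since a set dense in $S$ must meet the non-empty open set $S\cap X_C$. Hence $\{D_i\cap X_C:1\le i\le n\}$ is a dense $n$-partition of $S\cap X_C$. But $S\cap X_C$ is a non-empty subspace of $X_C$ (the subspace topologies agree because $X_C$ is a subspace of $X_\F$), so the $n$-HI hypothesis on $X_C$ makes it $n$-irresolvable, a contradiction. Therefore $X_\F$ is $n$-HI.

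The only real content lies in the choice of $C$ together with verifying that $S\cap X_C$ is open in $S$; the density transfer and the non-emptiness of the cells are then routine via Lemma \ref{densecrowded} and the defining property of dense sets. The main point to get right is that \emph{maximality}, not minimality, is forced: the topology of $X_\F$ makes the $R\up$-successor clusters ``more open'' (any open set meeting $X_C$ must swallow all strictly higher $X_{C'}$ entirely), so it is precisely at an $R\up$-maximal occupied cluster that $S\cap X_C$ can be carved out as a relatively open piece, which is what allows density to be inherited downward to that cluster.
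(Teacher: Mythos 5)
Your proof is correct and follows essentially the same route as the paper's: both pick an $R\up$-maximal cluster $C$ meeting the resolvable subspace, use the open set $O=X_C\cup\bigcup\{X_{C'}:CR\up C'\}$ to carve out $S\cap X_C$ as a non-empty relatively open piece, and transfer density of the cells down to it. The only cosmetic difference is that you invoke Lemma \ref{densecrowded} inside the subspace $S$, whereas the paper verifies the density transfer directly via the inclusion $O\cap\cl_{X_\F}S_i\sub\cl_{X_\F}(O\cap S_i)$ --- the same fact underlying that lemma.
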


\begin{proof}
If $X_\F$ is not $n$-HI, then it has some non-empty subspace $Y$ that  has $n$ pairwise disjoint subsets $S_1,\dots,S_n$ that are each dense in $Y$, i.e.\ $Y\sub\cl_{X_\F} S_i$.
Since $\CC$ is finite and $R\up$ is antisymmetric, there must be a $C\in\CC$ such that $X_C$ intersects $Y$ and $C$ is $R\up$-maximal with this property. Thus $X_C\cap Y\ne\emptyset$ but if $CR\up C'$ then $X_{C'}\cap Y=\emptyset$. Then putting  
$O=X_C\cup \bigcup\{X_{C'}:C R\up C'\}$ gives $O\cap Y=X_C\cap Y\ne\emptyset$.

Now $O$ is $X_\F$-open, so $O\cap Y$ is a non-empty $Y$-open set, hence it intersects the sets $S_i$ as they are dense in $Y$.  Thus the sets $\{O\cap S_i:1\leq i\leq n\}$ are pairwise disjoint and non-empty. They are also dense in $X_C\cap Y$, as
$$
X_C\cap Y=O\cap Y\sub O\cap \cl\nolimits_{X_\F} S_i\sub \cl\nolimits_{X_\F} (O\cap S_i),
$$
with the last inclusion holding because $O$ is  $X_\F$-open. This shows that $X_C\cap Y$ is an $n$-resolvable subspace of $X_C$, proving that $X_C$ is not $n$-HI.
\qed
\end{proof}

  To prove that K4$\C_n$ is characterised by $d$-validity in  $n+1$-HI T$_D$ spaces, we want to show that such spaces provide $d$-morphic preimages of all finite transitive frames of circumference at most $n$. To achieve this, the work so far indicates that we need to replace non-degenerate clusters by $n+1$-HI T$_D$ spaces that have crowded dense $k$-partitions for various $k\leq n$.
So we need to show such spaces exist.

The literature contains several constructions of $n$-resolvable spaces that are not $n+1$-resolvable.  For instance, 
\cite{douw:appl93} constructs ones that are crowded, countable and regular\footnote{van Douwen's terminology is different. He calls a space \emph{$n$-irresolvable} if it has a dense partition of size $n$, but none larger.}.
The most convenient construction for our purposes is given by \cite{elki:ultr69}. To describe it, first define $E$ to be a space, based on the set $\om$ of natural numbers, for which the set of open sets is $\mathcal{U}\cup\{\emptyset\}$ where $\mathcal U$ is some non-principal ultrafilter on $\om$. This makes $E$  a \emph{door} space: every subset is either open or closed. It is crowded, as no singleton belongs to $\mathcal U$, and is T$_1$ as every co-singleton $\om-\{x\}$ does belong to $\mathcal U$. 
$E$ has the special property that \emph{the intersection of any two non-empty $E$-open sets is non-empty} (infinite actually). This implies that any 
 non-empty open set is dense in $E$.
 
 The closure properties of an ultrafilter also ensure that $E$ is HI. For if a non-empty subspace $Y$ of $E$ has a 2-partition, then either $Y$ is open and so at least one cell of the partition is open, which prevents the other cell from being dense in $Y$; or else $Y$ is closed and so both cells are closed and hence neither is dense.

Now view $\om\times\{1,\dots,n\}$ as the union of its disjoint  subsets  
$\om\times\{i\}$ for $1\leq i\leq n$.
Let $X_n$ be the space based on $\om\times\{1,\dots,n\}$ whose non-empty open sets are all the sets of the form $\bigcup_{i\leq n}(O_i\times\{i\})$ where each $O_i$ is a \emph{non-empty} open subset of $E$. This definition does satisfy the axioms of a topology because of the special property of $E$ noted above. Put $S_i=\om\times\{i\}$. Then $\{S_i:1\leq i\leq n\}$ is an $n$-partition of $X_n$ that is dense because every  non-empty $X_n$-open set intersects every $S_i$, so the cells are all dense in $X_n$. Hence $X_n$ is $n$-resolvable.

The intersection of any  non-empty $X_n$-open set  with $S_i$ is of the form $O_i\times\{i\}$ with $O_i$ open in $E$. It follows that $S_i$ as a subspace of $X_n$ is a homeomorphic copy of $E$, so inherits the topological properties of $E$, including being a door space that is HI and having all its non-empty open subsets be dense.
It also follows that the non-empty open sets of $X_n$ are all the sets of the form $\bigcup_{i\leq n}O_i'$ where each $O'_i$ is a non-empty open subset of $S_i$. 

$S_i$ inherits from $E$ the property that its non-empty open sets are infinite. This implies that each $S_i$ is crowded in $X_n$, as is $X_n$ itself.

$X_n$ is also T$_1$, since for any point $(x,i)\in X_n$ the set
$$
X_n-\{(x,i)\}=S_1 \cup\cdots\cup[ (\om-\{x\})\times\{i\}]\cup\cdots\cup S_n
$$
is open in $X_n$, so $\{(x,i)\}$ is closed.

$X_n$ is not $n+1$-resolvable. This is implied by several results in the literature,  including that of  \citet[Proposition 1]{elki:reso69}, which states that a space is $n+1$-irresolvable if it has a dense $n$-partition with each cell having the property that each of its crowded subspaces is irresolvable.  
\citet[Lemma 2]{illa:fini96} proves $n+1$-irresolvability of any space that has an $n$-partition whose cells are  \emph{openly irresolvable} (OI), meaning that every non-empty \emph{open} subspace is irresolvable. 
The most general result of this type would appear to be that of  \citet[Lemma 3.2(a)]{ecke:reso97},  proving $n+1$-irresolvability of any space that is merely the union of $n$ subspaces that are each openly irresolvable. But it is instructive and more direct here to give a proof for  $X_n$ that uses its particular structure.

\begin{lemma}
If $A$ is a dense subset of $X_n$, then there exists an $i\leq n$ such that $A\cap S_i$ is non-empty and open in $S_i$.
\end{lemma}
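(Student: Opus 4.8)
The plan is to argue by contradiction, exploiting the fact that each subspace $S_i$ is a homeomorphic copy of the door space $E$, so that every subset of $S_i$ is either open or closed in $S_i$. I would suppose, contrary to the claim, that for every $i\le n$ the set $A\cap S_i$ fails to be both non-empty and open in $S_i$, and use this to manufacture a non-empty $X_n$-open set disjoint from $A$, contradicting the density of $A$.

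First I would treat each coordinate $i$ separately and produce a non-empty open subset of $S_i$ that avoids $A$. If $A\cap S_i=\emptyset$, then $S_i$ itself serves, being a non-empty open subset of $S_i$ disjoint from $A$. Otherwise $A\cap S_i$ is non-empty but, by the contradiction hypothesis, not open in $S_i$; since $S_i$ is a door space it must then be closed in $S_i$, and it cannot be all of $S_i$ (which is open), so it is a proper closed subset. Hence its complement $S_i-A$ is a non-empty open subset of $S_i$ disjoint from $A$. Thus in either case there is a non-empty open $O_i\sub S_i$ with $O_i\cap A=\emptyset$.

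Now I would assemble these pieces using the reformulated description of the topology: the non-empty open sets of $X_n$ are exactly the sets $\bigcup_{i\le n}O_i'$ in which each $O_i'$ is a non-empty open subset of $S_i$. Taking the $O_i$ just produced, the set $O=\bigcup_{i\le n}O_i$ is therefore a non-empty $X_n$-open set, and since each $O_i$ avoids $A$ we get $O\cap A=\emptyset$. This contradicts the density of $A$ in $X_n$, which finishes the argument.

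The proof is short, and its one real point is the use of the door property. The defining feature of the topology on $X_n$ is that a non-empty open set must have a non-empty open trace on \emph{every} $S_i$ simultaneously, so to refute density I need a single open set avoiding $A$ in all coordinates at once. The door property of $E$ is exactly what supplies this coordinatewise: either $A$ misses $S_i$ entirely, or $A\cap S_i$ is closed and its necessarily non-empty open complement does the job. I do not expect any genuine obstacle beyond recognising this dichotomy and checking that it lets the $n$ local choices be made compatibly.
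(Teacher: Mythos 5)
Your proof is correct and follows essentially the same route as the paper's: argue by contradiction, use the door property of each $S_i$ to obtain a non-empty open set $S_i-(A\cap S_i)$ in every coordinate, and take the union (which is exactly $X_n-A$) to produce a non-empty $X_n$-open set avoiding $A$, contradicting density. Your case split on whether $A\cap S_i$ is empty merely makes explicit what the paper handles in one line; the two arguments are otherwise identical.
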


\begin{proof}
Let $A$ be dense. Suppose that the conclusion does not hold. Then for each $i\leq n$, if $A\cap S_i$ is non-empty then it is not open in $S_i$, so is not equal to $S_i$.  Hence its complement $S_i-(A\cap S_i)$ is non-empty,  and open in $S_i$ as $S_i$ is a door space. If $A\cap S_i=\emptyset$, then  $S_i-(A\cap S_i)$ is again non-empty and open in $S_i$. Therefore the union $\bigcup_{i\leq n}[S_i-(A\cap S_i)]$ is, by definition, a non-empty open subset of $X_n$. But this union is $X_n-A$, so that contradicts the fact that $A$ is dense.
\qed
\end{proof}

Now if  $X_n$ were $n+1$-resolvable, it would have $n+1$ subsets $A_1,\dots,A_{n+1}$ that are pairwise disjoint and dense. Then by the lemma just proved, for each $j\leq n+1$ there would be some $i\leq n$ such that $A_j\cap S_i$ is non-empty and open in $S_i$, hence is dense in $S_i$ as explained above. Hence by the pigeonhole principle there must be \emph{distinct} $j,k\leq n+1$ such that there is some $i\leq n$ with  both  subsets $A_j\cap S_i$ and  $A_k\cap S_i$  dense in $S_i$. But these subsets are disjoint, so that contradicts the irresolvability of $S_i$. Therefore $X_n$ cannot be $n+1$-resolvable.

\begin{theorem}  \label{thmYn}
For any $n\geq 1$ there exists a non-empty crowded hereditarily $n+1$-irresolvable T$_1$ space $Y_n$ 
that has a crowded dense $n$-partition.
\end{theorem}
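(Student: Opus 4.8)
The plan is to take $Y_n$ to be the space $X_n$ constructed above, since almost all of the required properties are already in hand. We have shown that $X_n$ is non-empty and crowded, that it is T$_1$, and that the $n$-partition $\{S_i : 1 \le i \le n\}$ is dense with each cell $S_i$ crowded in $X_n$; thus $\{S_i\}$ is a crowded dense $n$-partition. The one clause of the statement not yet verified is hereditary $n+1$-irresolvability, for so far we have shown only that $X_n$ \emph{itself} is not $n+1$-resolvable.

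The main work, then, is to upgrade this to all subspaces. For an arbitrary non-empty subspace $Z \sub X_n$, I would write $Z = \bigcup_{i \le n}(Z \cap S_i)$, exhibiting $Z$ as a union of $n$ subspaces. Each $Z \cap S_i$ is a subspace of $S_i$, which is a homeomorphic copy of the hereditarily irresolvable space $E$. Since any subspace of an HI space is itself HI, and any HI space is in particular openly irresolvable (every non-empty open subspace, being a subspace, is irresolvable), each $Z \cap S_i$ is openly irresolvable. Applying the result of \citet[Lemma 3.2(a)]{ecke:reso97}, that any union of $n$ openly irresolvable subspaces is $n+1$-irresolvable, then gives that $Z$ is $n+1$-irresolvable. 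As $Z$ was arbitrary, $X_n$ is $n+1$-HI, which completes the verification.

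I expect the main obstacle to be precisely this hereditary step. The direct pigeonhole argument used above to rule out an $(n+1)$-resolution of $X_n$ itself relied on the fact that every non-empty $X_n$-open set meets each $S_i$ in a non-empty open set, so that a dense subset restricts to a dense, hence open, subset of some $S_i$. This localisation fails for a general subspace $Z$: density of a cell in $Z$ need not restrict to density in $Z \cap S_i$, since a witnessing $Z$-open set may meet the cell inside some $S_{i'}$ other than the one under consideration. This is why the self-contained combinatorial argument for the whole space does not transfer, and why I would instead invoke the stronger structural fact that each $S_i$ is hereditarily (hence openly) irresolvable together with Eckertson's union lemma, rather than attempting a direct proof on subspaces.
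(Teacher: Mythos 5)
Your proof is correct, but it takes a genuinely different route from the paper's. The paper does \emph{not} take $Y_n=X_n$: having established only that $X_n$ itself is $n+1$-irresolvable (via the pigeonhole argument), it invokes \citet[Prop.~2.1]{ecke:reso97} --- every $k$-irresolvable space contains a non-empty \emph{open} $k$-HI subspace, namely the complement of the union of all its $k$-resolvable subspaces --- with $k=n+1$, and takes $Y_n$ to be the resulting open subspace of $X_n$. Openness then lets $Y_n$ inherit everything else: T$_1$ passes to arbitrary subspaces, crowdedness passes to open subspaces, and by Lemma \ref{densecrowded} each trace $Y_n\cap S_i$ remains crowded and dense in $Y_n$, yielding the crowded dense $n$-partition. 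You instead upgrade the irresolvability of $X_n$ to its hereditary version directly, cutting an arbitrary subspace $Z$ along the cells and applying \citet[Lemma~3.2(a)]{ecke:reso97} (which the paper cites but does not actually use) to the traces $Z\cap S_i$; these are HI, hence openly irresolvable, since HI is inherited by subspaces and the subspace topology on $Z\cap S_i$ is unambiguous by transitivity of the subspace construction. This is sound; the only micro-point left implicit is that some traces may be empty, but the empty space is vacuously openly irresolvable (alternatively, apply the lemma with $m\leq n$ pieces and note that $m+1$-irresolvability implies $n+1$-irresolvability). Your route in fact proves something stronger than the paper needs --- $X_n$ is already $n+1$-HI, so the passage to an open subspace is unnecessary, and taking $Z=X_n$ in your argument subsumes the paper's pigeonhole proof of mere $n+1$-irresolvability. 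What the paper's route buys in exchange is independence from the finer structure of $X_n$ and from the union lemma: it needs only bare $n+1$-irresolvability together with the general Proposition 2.1, so it would work verbatim for any crowded T$_1$ $n+1$-irresolvable space with a crowded dense $n$-partition. Your closing diagnosis of why the pigeonhole argument fails to relativise to subspaces is accurate, and is precisely the obstruction that both detours --- yours through the union lemma, the paper's through the open shrinking --- are designed to avoid.
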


\begin{proof}
For any $k> 1$, every $k$-irresolvable space has a non-empty open subspace that is $k$-HI, constructed as the complement of the union of all $k$-resolvable subspaces \cite[Prop.~2.1]{ecke:reso97}.
So we apply this with $k=n+1$ to the  $n+1$-irresolvable space $X_n$ just described to conclude that $X_n$ has a non-empty open subspace $Y_n$ that is $n+1$-HI.
$Y_n$  inherits the  T$_1$ condition from $X_n$ and,
since $Y_n$ is open, it inherits the crowded condition from $X_n$, and it intersects each of the dense sets $S_i$. Also each intersection $S_i'=Y_n\cap S_i$ is crowded and dense in $Y_n$, as $S_i$ is crowded and dense in $X_n$ and $Y_n$ is open. Thus $\{S_i':1\leq i\leq n\}$ is a crowded dense $n$-partition of $Y_n$.
\qed
\end{proof}

\begin{theorem}  \label{thmdmorph}
For any $n\geq 1$, every finite $\mathrm{K4}\C_n$ frame is a $d$-morphic image of an hereditarily $n+1$-irresolvable T$_D$ space.
\end{theorem}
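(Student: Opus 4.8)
The plan is to run the generic construction of $X_\F$ set up just above, feeding it the concrete replacement spaces produced by Theorem \ref{thmYn}, and then to read off the two required topological properties from Lemmas \ref{lemTD} and \ref{lemnHI} while the $d$-morphism itself comes for free from Lemma \ref{lemdmorph}. So let $\F=(W,R)$ be a finite K4$\C_n$ frame. By Theorem \ref{K4Cnfmp}(2), $\F$ is finite, transitive, and of circumference at most $n$, so each non-degenerate cluster $C$ has size $k=|C|$ with $1\le k\le n$.

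First I would fix the building blocks $X_C$. For a degenerate cluster, take $X_C$ to be the one-point space as the construction prescribes; it is T$_D$, it is $(n+1)$-HI since (having a single point) it admits no partition into $n+1\ge 2$ cells, and its single point is discrete, which is exactly what the irreflexive clause of a $d$-morphism asks for. For a non-degenerate cluster $C$ with $|C|=k$, take $X_C$ to be a copy of $Y_k$ from Theorem \ref{thmYn} and label the cells of its crowded dense $k$-partition by the $k$ elements of $C$. Since every $X_C$ then carries a crowded dense $|C|$-partition, the hypotheses of the construction are met and Lemma \ref{lemdmorph} immediately yields a $d$-morphism $f$ from $X_\F$ onto $\F$.

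It then remains only to verify the two adjectives. Each $Y_k$ is T$_1$, hence T$_D$, and the one-point spaces are T$_D$, so Lemma \ref{lemTD} makes $X_\F$ a T$_D$ space. For irresolvability the key observation is that although Theorem \ref{thmYn} delivers $Y_k$ only as $(k+1)$-HI, the circumference bound upgrades this to what is needed: since $k\le n$ we have $k+1\le n+1$, and any dense $(n+1)$-partition of a subspace amalgamates to a dense $(k+1)$-partition, so $(k+1)$-HI forces $(n+1)$-HI. Thus every $X_C$ is $(n+1)$-HI, and Lemma \ref{lemnHI} (read with $n+1$ in the role of its generic parameter) concludes that $X_\F$ is $(n+1)$-HI. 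Hence $X_\F$ is an $(n+1)$-HI T$_D$ space admitting a $d$-morphism onto $\F$, as required.

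The genuine content, and the only place where the hypothesis on $\F$ is actually used, is this last degree-matching: the replacement $Y_k$ for a $k$-element cluster is guaranteed irresolvable only at level $k+1$, and it is precisely the circumference constraint $k\le n$ that pushes every one of these local levels below the single global target $n+1$, allowing the argument to proceed uniformly over all clusters at once. Everything else is bookkeeping — checking that the one-point spaces behave correctly under all three lemmas, and that labelling the partition cells of $Y_k$ by the elements of $C$ is compatible with the definition of $f$ — so I anticipate no real computational difficulty beyond this conceptual point.
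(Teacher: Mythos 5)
Your proposal is correct and takes essentially the same approach as the paper's own proof: replace each $k$-element non-degenerate cluster by a copy of $Y_k$ from Theorem \ref{thmYn}, use $k\le n$ to upgrade $(k{+}1)$-HI to $(n{+}1)$-HI, and then invoke Lemmas \ref{lemnHI}, \ref{lemTD} and \ref{lemdmorph}. The paper makes the same degree-matching observation and handles the singleton clusters in the same way, only more tersely.
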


\begin{proof}
Let $\F$ be a finite K4$\C_n$ frame. $\F$ is transitive with circumference at most $n$. We carry out the construction of the space $X_\F$ as above.

For each non-degenerate cluster $C$ of $\F$,  if $C$ has $k\geq 1$ elements, we take $X_C$ to be a copy of the T$_1$ space $Y_k$ of Theorem \ref{thmYn}, and let $\{X_w:w\in C\}$ to be the crowded dense $k$-partition of $Y_k$ provided by that theorem. Now $k\leq n$ and $Y_k$ is $k+1$-HI, so it is $n+1$-HI. Also any singleton subspace is $n+1$-HI, so we see that every subspace $X_C$ of $X_\F$ is $n+1$-HI. Hence $X_\F$ is  $n+1$-HI by Lemma \ref{lemnHI}.

Every subspace $X_C$ of $X_\F$, including the singleton ones, is T$_1$, hence is T$_D$. So $X_\F$ is T$_D$  by Lemma \ref{lemTD}.

The $d$-morphism from $X_\F$ onto $\F$ is provided by Lemma \ref{lemdmorph}.
\qed
\end{proof}

In the case $n=1$ of this construction, all clusters of $\F$ are singletons, and $X_\F$ is obtained by replacing each non-degenerate cluster by a copy of the El'kin space $E$. This is exactly the construction of
\citep{gabe:topo04} and  \citep{bezh:k4gr10}.

\begin{theorem}
For any $n\geq 1$, the logic $\mathrm{K4}\C_n$ is characterised by $d$-validity in all spaces that are hereditarily $n+1$-irresolvable and T$_D$.
\end{theorem}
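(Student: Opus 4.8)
The plan is to establish the two inclusions that together say K4$\C_n$ equals the set of formulas $d$-valid in every $(n+1)$-HI T$_D$ space. The soundness direction ($\text{K4}\C_n$ is contained in that set) follows at once from results already in hand. Fix any $(n+1)$-HI T$_D$ space $X$ and consider its $d$-logic $\{\ph:X\models_d\ph\}$, which is a normal logic. Because $X$ is T$_D$, the transitivity scheme 4 is $d$-valid in $X$, so this logic is transitive; and because $X$ is $(n+1)$-HI, the first of the four results recalled from \citep{gold:moda19} gives $X\models_d\C_n$. Thus the $d$-logic of $X$ is a transitive normal logic containing the scheme $\C_n$, and since K4$\C_n$ is by definition the smallest such logic, every K4$\C_n$-theorem is $d$-valid in $X$. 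As $X$ was arbitrary, soundness follows.

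For completeness I would argue by contraposition, following the route indicated just before Theorem \ref{thmdmorph}. Suppose $\ph$ is not a theorem of K4$\C_n$. By Theorem \ref{K4Cnfmp}(3), $\ph$ fails to be valid in some finite transitive frame $\F$ of circumference at most $n$, that is, in some finite K4$\C_n$-frame. By Theorem \ref{thmdmorph} there is an $(n+1)$-HI T$_D$ space $X$ together with a $d$-morphism from $X$ \emph{onto} $\F$. Theorem \ref{dmorph} then tells us that $X\models_d\ph$ would force $\F\models\ph$; since $\F\not\models\ph$, we conclude $X\not\models_d\ph$. Hence $\ph$ is not $d$-valid in every $(n+1)$-HI T$_D$ space, which is exactly what contraposition requires. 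Combining the two directions yields the characterisation.

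At this stage the statement is essentially a corollary assembling machinery already developed, so I do not expect a genuine obstacle inside this final argument. All the difficulty has been front-loaded into Theorem \ref{thmdmorph} --- in particular into the existence of the spaces $Y_n$ of Theorem \ref{thmYn}, where El'kin's ultrafilter space $E$ is used to manufacture $(n+1)$-HI T$_1$ spaces carrying crowded dense $k$-partitions for each $k\leq n$ --- together with the verification (Lemmas \ref{lemdmorph}, \ref{lemTD}, \ref{lemnHI}) that the glued space $X_\F$ inherits $(n+1)$-HI and T$_D$ while still $d$-morphing onto $\F$. The only point needing a little care in the present proof is the interplay between $d$- and $C$-validity: the invocation of Theorem \ref{dmorph} is precisely what transfers the failure of ordinary frame-validity of $\ph$ in $\F$ to failure of its $d$-validity in $X$, and the T$_D$ hypothesis is what guarantees that the $d$-logic of $X$ is transitive, so that the relevant target logic really is K4$\C_n$.
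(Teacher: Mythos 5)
Your proposal is correct and takes essentially the same route as the paper's own proof: soundness by noting that the $d$-logic of any $n+1$-HI T$_D$ space is a transitive normal logic containing $\C_n$ and hence extends K4$\C_n$, and completeness by contraposition via Theorem \ref{K4Cnfmp}(3), the $d$-morphic preimage supplied by Theorem \ref{thmdmorph}, and the validity transfer of Theorem \ref{dmorph}. Your closing assessment is also accurate: all the substantive work is front-loaded into Theorem \ref{thmdmorph} and the lemmas behind it, and this theorem is indeed just the assembly step.
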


\begin{proof}
If a space $X$ is $n+1$-HI and $T_D$, then
 the $d$-logic of $X$ includes the schemes 4 and $\C_n$, so it includes  K4$\C_n$ as the smallest normal logic to include these schemes. Hence every theorem of  K4$\C_n$ is $d$-valid in $X$.

For the converse direction, if a formula $\ph$ is not a theorem of  K4$\C_n$, then by Theorem \ref{K4Cnfmp}(3) there is a finite frame $\F$ that validates K4$\C_n$ but does not validate $\ph$. By Theorem \ref{thmdmorph} $\F$ is a $d$-morphic image of some $n+1$-HI T$_D$ space $X$. Since $\F\not\models\ph$, Theorem \ref{dmorph} then gives $X\not\models_d\ph$. Thus it is not the case that $\ph$ is $d$-valid in all $n+1$-HI and $T_D$ spaces.
\qed
\end{proof}

As already noted, the case $n=1$ of this result was given in \citep{gabe:topo04}, and in that case  the T$_D$ condition  is redundant, as hereditarily irresolvable spaces are always T$_D$.

\section{Some Extensions of  K4$\C_n$}

The D-axiom $\di\top$ is $d$-valid in a space $X$ iff $X=\de_X X$, i.e.\ iff $X$ is crowded. In general a space of the type $X_\F$ need not be crowded, for if $C$ is a degenerate final cluster of $\F$, then $X_C$ is an open singleton containing an isolated point of $X_\F$.
But we have shown in \citep[\S 7]{gold:moda19} that  K4D$\C_n$ is characterised by validity in all finite transitive frames that have circumference at most $n$ and all final clusters \emph{non-degenerate}. If  $\F$ is such a frame, and $C'$ is any final cluster of $\F$, then $X_{C'}$ is a crowded space of the type given by Theorem \ref{thmYn}. Now any open neighbourhood of a point $x$ in $X_\F$ includes
an open set of the form 
$O_B=B\cup\bigcup\{X_{C'}:C  R\up C'\}$,
where $B$ is an open neighbourhood of $x$ in some subspace $X_C$. If $C$ is final, then $O_B=B$ and $X_C$ is crowded, so $O_B$ contains points other than $x$. If $C$ is not final then there is a final $C'$ with $C R\up C'$, so $O_B$ includes $X_{C'}$, which consists of points distinct from $x$. Thus $x$ is not isolated, showing that $X_\F$ is crowded. This leads us to conclude

\begin{theorem}
For $n\geq 1$, K4D$\C_n$ is characterised by $d$-validity in all crowded T$_D$ spaces
that are hereditarily $n+1$-irresolvable.
\qed
\end{theorem}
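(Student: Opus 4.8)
The plan is to reuse, almost verbatim, the proof of the preceding characterisation theorem for K4$\C_n$, inserting the crowdedness ingredient into each half. For soundness I would argue that the $d$-logic of any crowded, T$_D$, $n+1$-HI space $X$ contains K4D$\C_n$. The three generating schemes are handled separately: scheme 4 is $d$-valid because $X$ is T$_D$; $\C_n$ is $d$-valid by the first listed result of \citep{gold:moda19}, since $X$ is $n+1$-HI; and the D-axiom $\di\top$ is $d$-valid because $X$ is crowded, i.e.\ $\de_X X=X$. Since K4D$\C_n$ is the smallest normal logic to include these schemes, soundness follows at once.

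Next, for completeness I would start from a non-theorem $\ph$ of K4D$\C_n$ and invoke the frame characterisation recalled above from \citep[\S 7]{gold:moda19}: there is a finite transitive frame $\F$ of circumference at most $n$, all of whose final clusters are non-degenerate, with $\F\not\models\ph$. I would then run the construction of $X_\F$ exactly as in Theorem \ref{thmdmorph}, replacing each non-degenerate $k$-element cluster by a copy of $Y_k$ from Theorem \ref{thmYn}, so that Lemma \ref{lemnHI} gives that $X_\F$ is $n+1$-HI, Lemma \ref{lemTD} gives that it is T$_D$, and Lemma \ref{lemdmorph} gives a $d$-morphism $f$ of $X_\F$ onto $\F$. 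Applying Theorem \ref{dmorph} then shows that $X_\F\models_d\ph$ would force $\F\models\ph$; as $\F\not\models\ph$, we obtain $X_\F\not\models_d\ph$, so $\ph$ is not $d$-valid in all crowded, T$_D$, $n+1$-HI spaces.

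The one step that is genuinely new, and which I expect to be the only real obstacle, is verifying that $X_\F$ is crowded. This is precisely where the hypothesis on final clusters is needed, and the argument given in the paragraph preceding the statement is exactly what I would use: any basic open neighbourhood $O_B$ of a point $x$ either equals a crowded subspace $X_C$ (when the cluster $C$ of $x$ is final, using that $X_C$ is one of the crowded spaces $Y_k$), or else absorbs an entire final subspace $X_{C'}$ with $C R\up C'$ (when $C$ is not final); in either case $O_B$ contains points distinct from $x$, so $x$ is not isolated and $X_\F=\de_{X_\F} X_\F$. Everything else is an immediate transfer from the K4$\C_n$ case, relying on Lemmas \ref{lemnHI}, \ref{lemTD}, \ref{lemdmorph} and Theorem \ref{dmorph}.
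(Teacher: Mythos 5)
Your proposal is correct and follows essentially the same route as the paper: soundness from the three schemes (4 via T$_D$, $\C_n$ via hereditary $n+1$-irresolvability, D via crowdedness), and completeness via the relational characterisation of K4D$\C_n$ by finite transitive frames of circumference at most $n$ with non-degenerate final clusters, together with the $X_\F$ construction and the same case analysis on $O_B$ (final versus non-final cluster) to verify crowdedness. Nothing is missing.
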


At the opposite extreme are  logics containing  the constant formula 
\begin{equation*} 
\text{E}: \quad   \bo\bot\lor\di\bo\bot.
\end{equation*}
This is $d$-valid in a space iff it is \emph{densely discrete}\footnote{This property was previously called \emph{weakly scattered}. The change of terminology was made, with explanation, in \cite[Definition 2.1]{bezh:tree20}.}, meaning that the set of isolated points is dense in the space.  The set of isolated points is $X-\de_X X$, so $X$ is densely discrete iff 
$ \cl_X(X-\de_X X)=X$, i.e.\
$$
(X-\de\nolimits_X X)\cup \de\nolimits_X(X-\de\nolimits_X X)=X.
$$
This equation expresses the $d$-validity of $\neg\di\top\lor \di\neg\di\top$, which is equivalent to E
(see  \citealt[proof of Theorem 4.28]{gabe:topo04}).

K4E$\C_n$ was  shown in \citep[\S 7]{gold:moda19} to be characterised by validity in all finite transitive  frames that have circumference at most $n$ and all final clusters \emph{degenerate}.  If  $\F$ is such a frame, and $C'$ is any final cluster of $\F$, then $X_{C'}$ is an open singleton, as noted above. If a point $x$ of $X_\F$ belongs to $X_C$ where $C$ is not final in $\F$, then there is a final $C'$ with $CR\up C'$, so any open neighbourhood of $x$ will include $X_{C'}$ and hence contain an isolated point. This shows that the isolated points are dense in $X_\F$, and leads to
\begin{theorem}
for $n\geq 1$, K4E$\C_n$ is characterised by $d$-validity in all densely discrete T$_D$ spaces
that are hereditarily $n+1$-irresolvable.
\qed
\end{theorem}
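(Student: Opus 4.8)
The plan is to reproduce the two-directional template of the earlier completeness theorems, since the substantive geometric content has already been discharged in the paragraph immediately preceding the statement.

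For the \emph{soundness} direction, I would verify that each of the defining schemes is $d$-valid in an arbitrary densely discrete, T$_D$, hereditarily $n+1$-irresolvable space $X$: scheme 4 because $X$ is T$_D$; scheme $\C_n$ by the first result recalled at the start of this section, since $X$ is $n+1$-HI; and scheme E because $X$ is densely discrete, as recorded just above. Because K4E$\C_n$ is by definition the smallest normal logic containing 4, E and $\C_n$, the $d$-logic of $X$ contains K4E$\C_n$, so every K4E$\C_n$-theorem is $d$-valid in $X$.

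For the \emph{completeness} direction, I would take a non-theorem $\ph$ of K4E$\C_n$ and invoke the frame characterisation of \citep[\S 7]{gold:moda19}: K4E$\C_n$ is characterised by validity in finite transitive frames of circumference at most $n$ all of whose final clusters are degenerate. This yields such an $\F$ with $\F\not\models\ph$. I would then run the $X_\F$ construction of Theorem \ref{thmdmorph} on this $\F$. Lemmas \ref{lemnHI} and \ref{lemTD} make $X_\F$ hereditarily $n+1$-irresolvable and T$_D$ exactly as before (each $X_C$ being T$_1$, hence T$_D$); the extra hypothesis that every final cluster of $\F$ is degenerate supplies dense discreteness via the argument of the preceding paragraph, namely that every point of $X_\F$ is either itself isolated (when it lies in a degenerate final $X_C$, which is an open singleton) or has all its open neighbourhoods engulf some degenerate final $X_{C'}$ with $CR\up C'$, which is an isolated point. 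Lemma \ref{lemdmorph} provides a $d$-morphism from $X_\F$ onto $\F$, so Theorem \ref{dmorph} together with $\F\not\models\ph$ forces $X_\F\not\models_d\ph$. Hence $\ph$ is not $d$-valid across the stated class of spaces.

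I expect no genuine obstacle: the single non-mechanical step is the verification of dense discreteness of $X_\F$, and that is precisely what the paragraph preceding the statement establishes, resting in turn on the constraint that all final clusters of $\F$ are degenerate. Everything else is a direct assembly of the lemmas already in hand, which is presumably why the statement carries a \qed in place of a separate proof.
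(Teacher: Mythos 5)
Your proposal is correct and takes essentially the same route as the paper: the theorem's proof is precisely the paragraph preceding its statement (dense discreteness of $X_\F$ from the degeneracy of all final clusters, each final $X_{C'}$ being an open singleton) assembled with the frame characterisation of K4E$\C_n$ from \citep[\S 7]{gold:moda19}, Lemmas \ref{lemdmorph}, \ref{lemTD} and \ref{lemnHI}, Theorem \ref{dmorph}, and the soundness facts that 4, $\C_n$ and E are $d$-valid in T$_D$, hereditarily $n+1$-irresolvable and densely discrete spaces respectively. Your two-directional write-up reproduces this assembly faithfully, including the key non-mechanical step (dense discreteness of $X_\F$), so there is nothing to correct.
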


In Theorem \ref{thmdmorph} we could have replaced every non-degenerate cluster $C$ by a copy of the same space $Y_n$, since its crowded dense $n$-partition can be converted into a crowded dense $k$-partition for any $k<n$ by amalgamating cells. But allowing $X_C$ to vary with the size of $C$ gives more flexibility in defining spaces. This is well illustrated in the case of logics that include the well-studied  McKinsey axiom M, 
often stated as $\bo\di\ph\to\di\bo\ph$.
We use the equivalent forms
$\di(\bo\ph\lor\bo\neg\ph)$ 
and $\di\bo\ph\lor\di\bo\neg\ph$.

It follows from \cite[Prop.~2.1]{bezh:scat03} that in $C$-semantics, M defines the class of openly irresolvable (OI) spaces (recall that these are the spaces in which every non-empty \emph{open} subspace is irresolvable). Equivalently, in $d$-semantics, the scheme $\di^*(\bo^*\ph\lor\bo^*\neg\ph)$  defines the class of OI spaces. 
We give a direct proof of this.

\begin{lemma}  \label{OIdM}
A space $X$ is openly irresolvable iff $X\models_d \di^*(\bo^*\ph\lor\bo^*\neg\ph)$ for all $\ph$.
\end{lemma}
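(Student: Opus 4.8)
The plan is to prove both directions by unpacking the $d$-semantic meaning of the formula $\di^*(\bo^*\ph\lor\bo^*\neg\ph)$ in terms of the closure and interior operators, and then relating the resulting set-theoretic condition to open irresolvability. Recall from the excerpt that $\M_d(\bo^*\psi)=\int_X\M_d(\psi)$ and $\M_d(\di^*\psi)=\cl_X\M_d(\psi)$. So if I write $A=\M_d(\ph)$, an arbitrary subset of $X$ (since $V$ is arbitrary, $A$ ranges over all subsets of $X$), then the truth set of $\bo^*\ph\lor\bo^*\neg\ph$ is $\int_X A\cup\int_X(X-A)$, and $X\models_d\di^*(\bo^*\ph\lor\bo^*\neg\ph)$ says exactly that
$$
\cl_X\bigl(\int\nolimits_X A\cup\int\nolimits_X(X-A)\bigr)=X \quad\text{for every } A\sub X.
$$
Thus the whole lemma reduces to showing that this closure condition, quantified over all $A$, is equivalent to open irresolvability.

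First I would prove the contrapositive of the ``only if'' direction (equivalently, the ``if'' direction stated positively): suppose $X$ is \emph{not} OI, so there is a non-empty open $U$ that is resolvable, with a dense $2$-partition $\{D,U-D\}$ of $U$. I would set $A=D$ and show the closure condition fails. The key observations are that $\int_X A$ and $\int_X(X-A)$ are both disjoint from $U$: indeed any open $O\sub A\cap U$ would be an open subset of $U$ disjoint from the dense set $U-D$, forcing $O=\emptyset$, and symmetrically for $X-A$. Hence $\int_X A\cup\int_X(X-A)\sub X-U$, which is closed, so its closure misses the non-empty open set $U$, and the closure condition fails for this $A$. This shows that validity of the scheme implies OI.

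For the converse (OI implies validity), I would assume $X$ is OI and fix an arbitrary $A\sub X$; the goal is $\cl_X(\int_X A\cup\int_X(X-A))=X$, i.e.\ that every non-empty open set $U$ meets $\int_X A\cup\int_X(X-A)$. Given such a $U$, I would consider the traces $A\cap U$ and $(X-A)\cap U$ inside the open subspace $U$. Since $U$ is OI (every non-empty open subspace of an OI space is OI, or at least irresolvable), the two sets $A\cap U$ and $(X-A)\cap U$ cannot both be dense in $U$; so one of them, say $A\cap U$, is not dense in $U$, meaning there is a non-empty open $W\sub U$ disjoint from $A\cap U$, hence $W\sub (X-A)$, giving $\emptyset\ne W\sub\int_X(X-A)$ and so $U\cap\int_X(X-A)\ne\emptyset$ (using that $W$, being open in the open set $U$, is open in $X$). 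The symmetric case puts a non-empty open subset of $U$ inside $\int_X A$. Either way $U$ meets $\int_X A\cup\int_X(X-A)$, as required.

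The main obstacle I anticipate is handling the two-way passage between the $d$-semantic $\di^*,\bo^*$ operators and the genuine topological operators cleanly, and being careful about the subspace-versus-ambient distinction for interiors: the sets $\int_X A\cap U$ need not equal $\int_U(A\cap U)$, so I must argue directly with ambient-open witnesses $W$ (noting that a set open in the open subspace $U$ is open in $X$) rather than conflating subspace and ambient interiors. The step that carries the real content is the converse direction's reduction of ``$A\cap U$ and $(X-A)\cap U$ not both dense in $U$'' to the existence of an ambient-open set inside $\int_X A$ or $\int_X(X-A)$ meeting $U$; this is exactly where irresolvability of the open subspace $U$ is used, and it is worth stating that a non-empty open subspace of an OI space is itself (openly, hence) irresolvable so that the hypothesis applies to $U$.
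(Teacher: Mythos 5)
Your proof is correct and takes essentially the same route as the paper's: both unpack $\di^*$ and $\bo^*$ as $\cl_X$ and $\int_X$, observe that a non-empty open $U$ misses $\int_X A\cup\int_X(X-A)$ exactly when $A\cap U$ and $(X-A)\cap U$ are both dense in $U$, and convert this into (ir)resolvability of the open subspace $U$. The only difference is presentational: you first isolate the purely topological condition $\cl_X\bigl(\int_X A\cup\int_X(X-A)\bigr)=X$ for all $A\sub X$ and prove the soundness direction positively via a witness $W$, whereas the paper runs both directions contrapositively with a model in hand --- the underlying argument is identical.
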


\begin{proof}
Suppose $X\not \models_d \di^*(\bo^*\ph\lor\bo^*\neg\ph)$ for some $\ph$. Then there is a model $\M$ on $X$ and a point of $X$ that is not a closure point of $\M_d((\bo^*\ph\lor\bo^*\neg\ph)$, and so has
an  open neighbourhood $U$ disjoint from this $d$-truth set. Let $A=\M_d(\ph)$. Then $U$ is disjoint from $\int_X A$ and from $\int_X (X-A)$, hence $U$ is included in $\cl_X A$ and in $\cl_X (X-A)$. Thus $U\cap A$ and $U\cap (X-A)$ are dense subsets of the non-empty open $U$, showing that $U$ is resolvable, and so $X$ is not OI.

Conversely, assume $X$ is not OI, so has a non-empty open subset $U$ which has a subset $A$ such that $A$ and $U-A$ are dense in $U$. Hence  $U$ is included in $\cl_X A$ and in $\cl_X (U-A)\sub \cl_X (X-A)$, so is 
disjoint from $\int_X A$ and from $\int_X (X-A)$. Take  a model $\M$ on $X$ with $A=\M_d(p)$ for some variable $p$. 
Then $U$ is disjoint from $\M_d((\bo^* p\lor\bo^*\neg p)$, so $\di^*(\bo^* p\lor\bo^*\neg p)$ is $d$-false in $\M$ at any member of $U$, hence is not $d$-valid in $X$.
\qed
\end{proof}

We now explore criteria for the $d$-validity of M itself.

\begin{theorem}  \label{XdM}
If $X$ is crowded and OI, then $X\models_d \mathrm{M}$.
\end{theorem}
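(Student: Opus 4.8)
The plan is to work with the form $\di(\bo\ph\lor\bo\neg\ph)$ of M and show that its $d$-truth set is all of $X$ in every model $\M$ on $X$. First I would translate the problem into the derived-set algebra. Writing $A=\M_d(\ph)$, one computes $\M_d(\bo\ph)=X-\de_X(X-A)$ and $\M_d(\bo\neg\ph)=X-\de_X A$, so that
$$
\M_d(\bo\ph\lor\bo\neg\ph)=X-B,\qquad B:=\de_X A\cap\de_X(X-A),
$$
the set of points that are limit points of both $A$ and its complement. Thus $X\models_d\mathrm M$ reduces to the single set-theoretic assertion $\de_X(X-B)=X$: every point of $X$ must be a limit point of $X-B$.

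The role of open irresolvability is to control the boundary $\partial A:=\cl_X A\cap\cl_X(X-A)$. Since $\de_X Y\sub\cl_X Y$ always holds, we have $B\sub\partial A$. Moreover no non-empty open $U$ can be included in $\partial A$: were $U\sub\cl_X A$ and $U\sub\cl_X(X-A)$, then $U\cap A$ and $U-A$ would be two disjoint dense subsets of $U$ (both non-empty since $U$ is open and meets both $A$ and $X-A$), resolving $U$ and contradicting OI. Hence $\int_X\partial A=\emptyset$; this is essentially Lemma \ref{OIdM} restated.

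The main step is then a crowdedness argument for $\de_X(X-B)=X$. Fix $x\in X$ and an open neighbourhood $O$ of $x$; I must produce a point of $(O-\{x\})\cap(X-B)$. Suppose not, so $O-\{x\}\sub B\sub\partial A$. Crowdedness is used here to pull the boundary condition back onto $x$ itself: given any open neighbourhood $V$ of $x$, the set $U=V\cap O$ is an open neighbourhood of $x$ inside $O$, and $U-\{x\}$ is non-empty because $x$ is not isolated; any $z\in U-\{x\}$ lies in $O-\{x\}\sub\cl_X A$, so the open neighbourhood $U$ of $z$ meets $A$, whence $V$ meets $A$. As $V$ was arbitrary this gives $x\in\cl_X A$, and symmetrically $x\in\cl_X(X-A)$, so $x\in\partial A$. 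Together with $O-\{x\}\sub\partial A$ this yields $O\sub\partial A$, a non-empty open subset of $\partial A$, contradicting $\int_X\partial A=\emptyset$. Therefore every punctured neighbourhood of every point meets $X-B$, giving $\de_X(X-B)=X$ and hence $X\models_d\mathrm M$.

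The crux, and the only step I expect to present any difficulty, is the passage from a punctured-neighbourhood (derived-set, $\di$) condition to a full-neighbourhood (closure, $\di^*$) condition, i.e.\ bridging the gap between the unstarred $\bo,\di$ appearing in M and the starred $\bo^*,\di^*$ in the OI characterisation. Crowdedness is exactly what licenses this: it supplies enough non-isolated points near $x$ to promote ``$O-\{x\}\sub\partial A$'' to ``$x\in\partial A$'', turning the punctured inclusion into an honest non-empty open subset of $\partial A$ on which open irresolvability can bite.
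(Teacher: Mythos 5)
Your proof is correct and is essentially the paper's own argument: both reduce the $d$-validity of M to the open-irresolvability fact that the boundary $\cl_X A\cap\cl_X(X-A)$ has empty interior (the content of Lemma \ref{OIdM}), and both use crowdedness to bridge the gap between the starred modalities (closure) and the unstarred ones (derived set). The only difference is presentational: the paper performs the bridging at the level of truth sets, observing that the open set $\M_d(\bo^*\ph\lor\bo^*\neg\ph)$ is crowded (Lemma \ref{densecrowded}) and hence has closure equal to derived set, while your pointwise promotion of $O-\{x\}\sub\partial A$ to $x\in\partial A$ is exactly that same fact unpacked at the level of punctured neighbourhoods.
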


\begin{proof}
Let $X$ be crowded and OI. Take any model $\M$ on $X$, any formula $\ph$, and let 
$S=\M_d(\bo^*\ph\lor\bo^*\neg\ph)$. 
Then $S$ is open, as the union of two interiors, so as $X$ is crowded, it follows that $S$ is crowded, hence $\cl_X S=\de_X S$. Using this, and the fact that $\M_d(\bo^*\psi)\sub\M_d(\bo\psi)$ for any $\psi$, we deduce that
\begin{equation*}
\M_d(\di^*(\bo\nolimits^*\ph\lor\bo\nolimits^*\neg\ph))= \M_d(\di(\bo\nolimits^*\ph\lor\bo\nolimits^*\neg\ph))
\sub \M_d(\di(\bo\ph\lor\bo\neg\ph)).
\end{equation*}
As $X$ is OI, 
$ \di^*(\bo^*\ph\lor\bo^*\neg\ph)$ is  $d$-true in $\M$ by Lemma \ref{OIdM}.
Therefore by the above inclusion, $\di(\bo\ph\lor\bo\neg\ph)$ is $d$-true in $\M$.

This shows that scheme M is $d$-valid in $X$.
\qed
\end{proof}

The converse of this result does not hold.
For instance, a two-element indiscrete space $X=\{x,y\}$  is resolvable, as $\{x\}$ and $\{y\}$ are dense, so $X$ is not OI, 
but it  $d$-validates M. The latter is so because the operation $\de_X$ interchanges $\{x\}$ and $\{y\}$ and leaves $X$ and $\emptyset$ fixed, from which it follows that $\di\ph\land\di\neg\ph$ is $d$-false, hence $\bo\ph\lor\bo\neg\ph$ is $d$-true, at both points in any model on $X$. So $X$  $d$-validates M.

What does hold is that in $d$-semantics, M defines the class of crowded openly irresolvable spaces \emph{ within the class of T$_D$ spaces}.

\begin{lemma} \label{Odash}
Let $X$ be crowded and T$_D$.
\begin{enumerate}[\rm 1.]
\item
Any open neighbourhood $O$ of a point $x$ in $X$ includes an  open neighbourhood $O'$ of $x$ such that $O'-\{x\}$ is non-empty and open.
\item
If $O$ is an open set in $X$, then $O\sub \cl_X S$ implies  $O\sub \de_X S$, for any $S\sub X$.
\item
$\int_X\cl_X S=\int_X\de_X S$ for any $S\sub X$.
\item
For any model $\M$ on $X$ and formula  $\ph$,
$$
\M_d( \di^*(\bo\nolimits^*\ph\lor\bo\nolimits^*\neg\ph))= \M_d(\di(\bo\ph\lor\bo\neg\ph)).
$$
\end{enumerate}
\end{lemma}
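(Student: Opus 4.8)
The four parts reinforce one another, so the plan is to prove them in order, treating part~1 as the key technical device that fuses the two hypotheses. For part~1 I would take $O'=O\cap(X-\de_X\{x\})$. Since $X$ is T$_D$, the set $\de_X\{x\}$ is closed, so $O'$ is open, and since $x\notin\de_X\{x\}$ always holds, $O'$ is a neighbourhood of $x$. Crowdedness says $\{x\}$ is not open, so $O'$ cannot equal $\{x\}$ and thus $O'-\{x\}\ne\emptyset$. It remains to see that $O'-\{x\}$ is open: any $y\in O'-\{x\}$ has $y\notin\de_X\{x\}$, so some open $U\ni y$ satisfies $(U-\{y\})\cap\{x\}=\emptyset$; as $y\ne x$ this forces $x\notin U$, and then $U\cap O'$ is an open neighbourhood of $y$ contained in $O'-\{x\}$.

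For part~2, fix $x\in O$; to show $x\in\de_X S$ it suffices that every open neighbourhood $U$ of $x$ has $(U-\{x\})\cap S\ne\emptyset$. Since $U\cap O$ is an open neighbourhood of $x$ lying inside $\cl_X S$, applying part~1 to it produces an open $O'\sub U\cap O$ for which $O'-\{x\}$ is non-empty and open; being a non-empty open subset of $\cl_X S$, it must meet $S$, and $O'-\{x\}\sub U-\{x\}$ then gives $(U-\{x\})\cap S\ne\emptyset$. Part~3 follows at once: $\de_X S\sub\cl_X S$ yields $\int_X\de_X S\sub\int_X\cl_X S$, while part~2 applied to the open set $\int_X\cl_X S$ gives $\int_X\cl_X S\sub\de_X S$ and hence $\int_X\cl_X S\sub\int_X\de_X S$.

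Part~4 is the payoff. Writing $A=\M_d(\ph)$, $C=\int_X A\cup\int_X(X-A)=\M_d(\bo^*\ph\lor\bo^*\neg\ph)$ and $B=\M_d(\bo\ph\lor\bo\neg\ph)$, the claim is $\cl_X C=\de_X B$. Because $\int_X\M_d(\psi)\sub\M_d(\bo\psi)$ makes $\bo^*\psi\to\bo\psi$ $d$-valid, we get $C\sub B$; and as $C$ is open with $X$ crowded, $C$ is crowded (Lemma~\ref{densecrowded} with $S=X$), so $\cl_X C=\de_X C\sub\de_X B$. For the reverse inclusion I would show $B\sub\cl_X C$: a point $x\in B$ lies, say, in $\M_d(\bo\ph)$, so some open $U\ni x$ satisfies $U-\{x\}\sub A$; given any neighbourhood $V$ of $x$, part~1 applied to $U\cap V$ yields a non-empty open $O'-\{x\}\sub A$, whence $O'-\{x\}\sub\int_X A\sub C$ and $V$ meets $C$, so $x\in\cl_X C$. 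Then $\de_X B\sub\de_X\cl_X C\sub\cl_X C$, giving equality. The main obstacle is part~1, the single place where T$_D$ and crowdedness must act together to make $O'-\{x\}$ at once non-empty (from crowdedness) and open (from T$_D$); once it is available, parts~2--4 are monotonicity arguments plus the one extra fact that an open subspace of a crowded space is crowded, so that $\cl_X$ and $\de_X$ agree on $C$.
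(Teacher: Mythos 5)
Your proposal is correct. Parts 1--3 coincide with the paper's own proof: you take the same set $O'=O-\de_X\{x\}$, with the same use of T$_D$ for openness of $O'$ and crowdedness for non-emptiness of $O'-\{x\}$ (you verify openness of $O'-\{x\}$ pointwise, where the paper uses the one-line identity $O'-\{x\}=O-\cl_X\{x\}$, a cosmetic difference), the same neighbourhood argument for part 2, and the same pair of inclusions for part 3. Part 4 is where you genuinely diverge. The paper obtains the right-to-left inclusion by a formula-level dualization: from $\di\bo\ph$ true at $x$ it passes to $\di^*\bo\ph$, negates to get $\bo^*\di\neg\ph$ false at $x$, invokes part 3 in the guise $\M_d(\bo^*\di\neg\ph)=\M_d(\bo^*\di^*\neg\ph)$, and concludes $\di^*\bo^*\ph$ holds at $x$. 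You bypass part 3 entirely and argue directly in the topology: writing $B=\M_d(\bo\ph\lor\bo\neg\ph)$ and $C=\M_d(\bo^*\ph\lor\bo^*\neg\ph)$, you show $B\sub\cl_X C$ by applying part 1 inside a punctured neighbourhood witnessing $\bo\ph$ (or $\bo\neg\ph$), producing a non-empty open set avoiding $x$ that lands in $\int_X A$ (or $\int_X(X-A)$) and hence in $C$, and then you close up via $\de_X B\sub\de_X\cl_X C\sub\cl_X C$; combined with the soundness half ($C$ open and crowded, so $\cl_X C=\de_X C\sub\de_X B$, exactly the paper's Theorem \ref{XdM} argument), this gives the set equation $\cl_X C=\de_X B$, which is precisely the claim. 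Your route is more elementary and self-contained, making part 1 carry all the weight and exposing the set-level content that the paper's modal-algebraic manipulation encodes; the paper's route, in exchange, makes the dependence of part 4 on part 3 explicit and keeps the reasoning at the level of valid equivalences between formulas. All your auxiliary steps (that a non-empty open subset of $\cl_X S$ must meet $S$, and that open subsets of a crowded space are crowded, via Lemma \ref{densecrowded}) are sound.
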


\begin{proof}
1. Let  $x\in O$ with $O$ open. Put $O'=O-\de_X\{x\}$.
Since $x\notin\de_X\{x\}$, the set $O'$ contains $x$, and is open because $\de_X\{x\}$ is closed by the T$_D$ condition.  Then $O'-\{x\}$ is non-empty, since $x$ is not isolated as $X$ is crowded. Also 
$
O'-\{x\}=O-(\de\nolimits_X\{x\}\cup\{x\})=O-\cl\nolimits_X\{x\}
$
which is open.

2.  Let $O\sub \cl_X S$ and $x\in O$. If $U$ is any open neighbourhood of $x$, then so is $O\cap U$, hence by part 1 there is a non-empty open set $O_1\sub O\cap U$ with $x\notin O_1$.
Now $O\cap S$ is dense in $O$, as $O=O\cap  \cl_X S\sub\cl_X(O\cap S)$. Therefore as $O_1$ is open in $O$, it intersects $O\cap S$. As $O_1\sub U-\{x\}$, we get that $ U-\{x\}$ intersects $S$. This proves that $x$ is a limit point of $S$, as required.

3. Putting $O=\int_X\cl_X S$ in 2, we get that $\int_X\cl_X S$ is an open subset of $\de_X S$, hence is a subset of 
$\int_X\de_X S$. Conversely, $\int_X\de_X S\sub\int_X\cl_X S$ as $\de_X S\sub \cl_X S$.

4.
It was shown in the proof of Theorem \ref{XdM}, just using the fact that $X$ is crowded, that the left truth set is included in the right one. For the reverse inclusion, working in the model $\M$, suppose
$\di(\bo\ph\lor\bo\neg\ph)$ is true at some point $x$. Then so is $\di\bo\ph\lor\di\bo\neg\ph$, hence so is one of $\di\bo\ph$ and $\di\bo\neg\ph$. If $\di\bo\ph$ is true at $x$, then so is $\di^*\bo\ph$, hence  $\bo^*\di\neg\ph$ is false at $x$.
 By part 3, $\M_d(\bo^*\di\neg\ph)=\M_d(\bo^*\di^*\neg\ph)$, so then $\bo^*\di^*\neg\ph$ is false at $x$, hence
 $\di^*\bo^*\ph$ is true at $x$.
 
 Similarly, if $\di\bo\neg\ph$ is true at $x$, then so is  $\di^*\bo^*\neg\ph$.
Since $\di\bo\ph\lor\di\bo\neg\ph$ is true at $x$, so then is $\di^*\bo\nolimits^*\ph\lor\di^*\bo\nolimits^*\neg\ph$, hence 
so is 
$\di^*(\bo\nolimits^*\ph\lor\bo\nolimits^*\neg\ph)$,
as required to prove the inclusion from right to left.
\qed
\end{proof}

\begin{theorem}\label{TDsound}
If $X$ is a T$_D$ space and $X\models_d\mathrm{M}$, then
$X$ is crowded and openly irresolvable.
\end{theorem}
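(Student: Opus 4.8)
The plan is to prove the two conclusions in sequence, establishing crowdedness first and then using it to unlock Lemma~\ref{Odash} for the open-irresolvability half.

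First I would show $X$ is crowded, arguing contrapositively. Suppose $X$ has an isolated point $x$, so that $\{x\}$ is open. Then $\{x\}-\{x\}=\emptyset$ is a punctured neighbourhood of $x$, and since $\emptyset$ does not intersect any set, the clause defining $\M_d(\di\psi)$ fails at $x$ for every formula $\psi$; that is, $\di\psi$ is $d$-false at $x$ in every model. Taking M in the form $\di\bo\ph\lor\di\bo\neg\ph$, both disjuncts are then $d$-false at $x$, so M itself is $d$-false at $x$, contradicting $X\models_d\mathrm{M}$. Hence $X$ has no isolated points and is crowded.

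With crowdedness secured, $X$ is both crowded and T$_D$, so Lemma~\ref{Odash}(4) applies and gives, for every model $\M$ and formula $\ph$,
$$
\M_d(\di^*(\bo\nolimits^*\ph\lor\bo\nolimits^*\neg\ph))=\M_d(\di(\bo\ph\lor\bo\neg\ph)).
$$
Since $X\models_d\mathrm{M}$ and M is equivalent to $\di(\bo\ph\lor\bo\neg\ph)$, the right-hand truth set is all of $X$ for every choice of $\M$ and $\ph$; by the displayed equality the left-hand set is then all of $X$ as well. Thus $X\models_d\di^*(\bo^*\ph\lor\bo^*\neg\ph)$ for all $\ph$, and Lemma~\ref{OIdM} yields that $X$ is openly irresolvable.

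The only genuinely new observation is the crowdedness step; thereafter the argument merely repackages the already-proved Lemmas~\ref{OIdM} and~\ref{Odash}. I expect no real obstacle, the one point requiring care being that Lemma~\ref{Odash} was established under the standing hypothesis that $X$ is crowded and T$_D$---which is precisely why crowdedness must be dispatched before, rather than simultaneously with, the OI conclusion.
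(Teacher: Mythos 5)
Your proposal is correct and follows essentially the same route as the paper: crowdedness first, then Lemma~\ref{Odash}.4 to pass from $\di(\bo\ph\lor\bo\neg\ph)$ to $\di^*(\bo^*\ph\lor\bo^*\neg\ph)$, and finally Lemma~\ref{OIdM}. The only cosmetic difference is in the crowdedness step, where the paper instantiates $\ph:=\top$ so that $\di(\bo\top\lor\bo\neg\top)$ $d$-defines $\de_X X$, while you argue contrapositively that an isolated point falsifies every $\di$-formula --- two phrasings of the same observation that crowdedness means $\de_X X=X$.
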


\begin{proof}
Let $X$ be T$_D$ space and $X\models_d\mathrm{M}$. Then $X$ $d$-validates $\di(\bo\top\lor\bo\neg\top)$. But this formula $d$-defines $\de_X X$ in any model on $X$, so  $\de_X X=X$, i.e.\ $X$ is crowded.

We now have that $X$ is crowded and T$_D$, and any formula of the form $\di(\bo\ph\lor\bo\neg\ph)$ is $d$-valid on $X$, i.e.\ $d$-true in all models on $X$. But then by Lemma \ref{Odash}.4, $\di^*(\bo\nolimits^*\ph\lor\bo\nolimits^*\neg\ph)$ is $d$-valid in $X$. Hence $X$ is OI by Lemma \ref{OIdM}.
\qed
\end{proof}

Theorems \ref{XdM} and \ref{TDsound} combine to give

\begin{corollary}
If  $X$ is T$_D$,  then $X\models_d\mathrm{M}$ iff
$X$ is crowded and openly irresolvable.
Hence
if\/ $X$ is T$_D$ and crowded, then $X\models_d\mathrm{M}$ iff
$X$ is  openly irresolvable iff $X\models_C\mathrm{M}$.
\qed
\end{corollary}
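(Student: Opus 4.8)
The plan is to assemble the corollary directly from Theorems \ref{XdM} and \ref{TDsound} together with the cited $C$-semantic characterisation of M. For the first biconditional I would fix a T$_D$ space $X$ and argue both implications. The left-to-right direction is precisely Theorem \ref{TDsound}: assuming $X$ is T$_D$ and $X\models_d\mathrm{M}$, that theorem already delivers that $X$ is crowded and openly irresolvable. For the converse I would invoke Theorem \ref{XdM}, which establishes that any crowded OI space $d$-validates M; since that theorem does not even require the T$_D$ hypothesis, it applies here at once. Putting the two directions together yields that, under the standing T$_D$ assumption, $X\models_d\mathrm{M}$ holds exactly when $X$ is both crowded and openly irresolvable.

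For the second statement I would add the extra hypothesis that $X$ is crowded. Then in the characterisation just obtained the clause ``$X$ is crowded'' is automatically satisfied, so $X\models_d\mathrm{M}$ reduces to the single condition that $X$ is openly irresolvable. To complete the chain of equivalences I would appeal to the fact, recorded earlier in this section (from \cite[Prop.~2.1]{bezh:scat03}), that in $C$-semantics M defines exactly the class of OI spaces, i.e.\ $X\models_C\mathrm{M}$ iff $X$ is OI. This holds for arbitrary spaces, and in particular for our crowded T$_D$ space. Concatenating gives $X\models_d\mathrm{M}$ iff $X$ is OI iff $X\models_C\mathrm{M}$, as claimed.

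Since every ingredient is already in hand, I do not expect any genuine obstacle; the work is purely one of careful bookkeeping. The only point needing slight care is the logical simplification in the second part: one must note that the conjunction ``crowded and OI'' collapses to ``OI'' \emph{only} because crowdedness is now assumed outright, so the first biconditional is being specialised rather than reproved. It is also worth remarking that the T$_D$ hypothesis is essential to the first equivalence but plays no role in the $C$-semantic half of the second, which explains why the final \emph{iff} involving $\models_C$ remains available without further restriction.
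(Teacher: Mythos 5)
Your proposal is correct and matches the paper's own (one-line) proof exactly: the first equivalence is obtained by combining Theorems \ref{XdM} and \ref{TDsound}, and the second by specialising to crowded spaces and invoking the fact that in $C$-semantics M defines the class of OI spaces. Your remark that the T$_D$ hypothesis is needed only for the $d$-semantic direction, while the $C$-semantic equivalence holds for arbitrary spaces, is accurate and consistent with the paper's setup.
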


K4M$\C_n$ was  shown in \citep[\S 7]{gold:moda19} to be characterised by validity in all finite transitive  frames that have circumference at most $n$ and all final clusters simple. If $\F$ is such a frame,  $X_\F$ is $n+1$-HI and T$_D$, as shown in the proof of Theorem \ref{thmdmorph}.
All final clusters of $\F$ are non-degenerate, which is enough to ensure that $X_\F$ is a crowded space, as explained in our discussion of K4D$\C_n$. 

\begin{lemma}
$X_\F$ is openly irresolvable.
\end{lemma}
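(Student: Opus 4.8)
The plan is to exploit the fact that, because the frame condition for K4M$\C_n$ forces every final cluster of $\F$ to be simple, the blocks $X_{C}$ attached to final clusters are hereditarily irresolvable \emph{and} sit inside $X_\F$ as \emph{open} subspaces. Any non-empty open set of $X_\F$ can then be localised into one such block, where irresolvability comes for free. So I would first record two facts about a final cluster $C$. Since $C$ is simple, $C=\{w\}$ with $wRw$, and in the construction of Theorem \ref{thmdmorph} the block $X_C=X_w$ is a copy of $Y_1$, which is hereditarily irresolvable (it is an open subspace of the El'kin space $E$, every subspace of which is irresolvable). Moreover a final $C$ has no strict $R\up$-successor, so the extra clause in the definition of openness is vacuous for $C$: the set $X_C$ itself satisfies the openness condition, whence $X_C$ is open in $X_\F$, not merely a subspace.

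Next I would check that every non-empty open $O\sub X_\F$ meets some final cluster. Picking $x\in O$ with $x\in X_C$, either $C$ is final and we are done, or $C$ is not $R\up$-maximal; since $\CC$ is finite and $R\up$ is a strict partial order there is then a final cluster $C'$ with $CR\up C'$, and openness of $O$ together with $O\cap X_C\ne\emptyset$ forces $X_{C'}\sub O$, so $O$ meets the final cluster $C'$.

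The main step is to show that every non-empty open $O$ is irresolvable. I would choose a final cluster $C^*$ meeting $O$ and set $U=O\cap X_{C^*}$, which is non-empty and, as the intersection of two $X_\F$-open sets, is open in $O$. Suppose for contradiction that $O$ is resolvable, say $O$ is the disjoint union of sets $A$ and $B$ each dense in $O$. Applying Lemma \ref{densecrowded} inside the space $O$ with the open set $U$, the sets $A\cap U$ and $B\cap U$ are each dense in $U$; they are disjoint and non-empty with union $U$, so they form a $2$-resolution of $U$. But $U$ is a non-empty subspace of the hereditarily irresolvable space $X_{C^*}$, so $U$ is irresolvable, a contradiction. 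Hence $O$ is irresolvable, and since $O$ was an arbitrary non-empty open set, $X_\F$ is openly irresolvable.

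I expect the only delicate point to be the verification that $X_{C^*}$ is open in $X_\F$ (and not merely a subspace) when $C^*$ is final, since this is exactly what licenses treating $U$ as an open subset of $O$ and invoking the density-restriction lemma; once that is in hand, everything downstream is a direct application of the hereditary irresolvability of the El'kin block.
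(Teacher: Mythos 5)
Your proof is correct and follows essentially the same route as the paper's: both localise an arbitrary non-empty open $O$ into a non-empty open subset of an hereditarily irresolvable block attached to a final (hence simple) cluster, and then restrict a putative pair of disjoint dense subsets of $O$ to that open subset to contradict irresolvability. The only cosmetic difference is that you collapse the paper's two cases (the cluster met by $O$ being final or not) by noting up front that final blocks $X_{C^*}$ are themselves open in $X_\F$ and taking $U=O\cap X_{C^*}$, where the paper instead works with the auxiliary open set $O_B=B\cup\bigcup\{X_{C'}:C R\up C'\}$.
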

\begin{proof}
Let $O$ be any non-empty  open subset of $X_\F$. Then $O\cap X_C\ne\emptyset$ for some cluster $C$ of $\F$. Put $B=O\cap X_C$. Then
$O_B=B\cup\bigcup\{X_{C'}:C R\up C'\}$  is a non-empty subset of $O$ that is open in $X_\F$.  If $C$ is final, then since it is a non-degenerate singleton, $X_C$ is a copy of the El'kin space $E$, and also $O_B=B\sub X_C$. 
If however $C$ is not final,  there is a final $C'$ with $CR\up C'$. Then  $X_{C'}\sub O_B$ and $X_{C'}$ is open in $X_\F$.

So in any case we see that $O$ has a non-empty open subset $O'$ (either $O_B$ or $X_{C'}$) that is included in a subspace $X'$ (either $X_C$ or $X_{C'}$) that is a copy of $E$ and hence is HI. Hence $O'$ is irresolvable.
Now if $O$ had a pair of disjoint dense subsets, then these subsets would intersect the open $O'$ in a pair of disjoint dense subsets of $O'$, contradicting irresolvability of $O'$.
Therefore $O$ is irresolvable as required.
\qed
\end{proof}
Altogether we have now shown that $X_\F$ is T$_D$, crowded, OI, and $n+1$-HI, which implies that it $d$-validates K4M$\C_n$. 
We conclude 
\begin{theorem}
For $n\geq 1$, K4M$\C_n$ is characterised by $d$-validity in all T$_D$ spaces that are crowded, 
openly irresolvable,  and hereditarily $n+1$-irresolvable.
\qed
\end{theorem}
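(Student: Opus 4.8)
The plan is to prove both halves of the characterisation, soundness and completeness, in exactly the manner of the earlier proof that K4$\C_n$ is the $d$-logic of the $n+1$-HI T$_D$ spaces, now with the McKinsey axiom folded in. For soundness I would show that any space $X$ that is T$_D$, crowded, openly irresolvable, and $n+1$-HI has a $d$-logic containing all of K4M$\C_n$. Being $n+1$-HI and T$_D$ already forces the $d$-logic of $X$ to contain the schemes 4 and $\C_n$, so it contains K4$\C_n$; and being crowded and openly irresolvable yields $X\models_d\mathrm{M}$ by Theorem \ref{XdM}. Since K4M$\C_n$ is the smallest normal logic containing 4, $\C_n$, and $\mathrm{M}$, every K4M$\C_n$-theorem is $d$-valid in $X$.

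For completeness I would start from the relational characterisation of K4M$\C_n$ established in \citep[\S 7]{gold:moda19}: this logic is sound and complete for validity in the finite transitive frames of circumference at most $n$ whose final clusters are all simple. So if $\ph$ is not a theorem of K4M$\C_n$, there is such a frame $\F$ with $\F\not\models\ph$. I would then run the construction of $X_\F$ from Theorem \ref{thmdmorph}, replacing each non-degenerate $k$-element cluster by a copy of the space $Y_k$ of Theorem \ref{thmYn}; since $\F$ has all final clusters simple, each such final cluster is a non-degenerate singleton and is replaced by a copy of the El'kin space $E$. The preceding lemmas and discussion then supply all four required properties of $X_\F$ at once: Lemma \ref{lemnHI} gives $n+1$-HI, Lemma \ref{lemTD} gives T$_D$, the crowdedness argument used for K4D$\C_n$ (applicable because all final clusters are non-degenerate) gives that $X_\F$ is crowded, and the lemma immediately preceding this theorem gives open irresolvability. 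Hence $X_\F$ lies in the target class of spaces.

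Finally, Lemma \ref{lemdmorph} provides a surjective $d$-morphism $f\colon X_\F\to\F$, so by Theorem \ref{dmorph} the failure $\F\not\models\ph$ lifts to $X_\F\not\models_d\ph$. Thus $\ph$ fails to be $d$-valid in some T$_D$, crowded, openly irresolvable, $n+1$-HI space, which completes the completeness direction and, together with soundness, establishes that K4M$\C_n$ is precisely the $d$-logic of this class.

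The only genuinely substantive point is the verification that a \emph{single} space $X_\F$ can carry all four properties simultaneously, and this has essentially been discharged by the chain of results leading up to the theorem. I expect no new obstacle here beyond assembling those pieces: the delicate work — the existence of the El'kin-type spaces $Y_k$ with crowded dense $k$-partitions, and the stability of $n+1$-HI, T$_D$, crowdedness, and open irresolvability under the $X_\F$ construction — has already been carried out. The conceptual key is that the \emph{simplicity} of the final clusters is exactly the hypothesis that makes $X_\F$ both crowded (final pieces are non-degenerate) and openly irresolvable (final pieces are copies of the HI space $E$), while the remainder of the construction keeps $X_\F$ both $n+1$-HI and T$_D$.
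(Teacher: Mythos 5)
Your proposal is correct and follows essentially the same route as the paper: soundness from the $n+1$-HI/T$_D$ facts together with Theorem \ref{XdM}, and completeness via the relational characterisation of K4M$\C_n$ from \citep[\S 7]{gold:moda19}, the $X_\F$ construction, Lemmas \ref{lemdmorph}, \ref{lemTD} and \ref{lemnHI}, the crowdedness argument from the K4D$\C_n$ discussion, the open-irresolvability lemma, and Theorem \ref{dmorph}. Your closing observation that simplicity of the final clusters is exactly what delivers crowdedness and open irresolvability simultaneously is precisely the point the paper's lemma exploits.
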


When $n=1$, this can be simplified, since an HI space is always OI and T$_D$. Thus the logic K4M$\C_1$ is characterised
by $d$-validity in the class of all crowded HI spaces, as was  shown by \citet[Theorem 4.26]{gabe:topo04} with K4M$\C_1$ in the form 
K4MGrz$_\Box$. But the class of 
crowded HI spaces characterises  K4D$\C_1$, as shown above. Therefore
K4M$\C_1$ is identical to the ostensibly weaker K4D$\C_1$. This can also be seen quite simply from our relational completeness result for  K4D$\C_1$. In a finite  K4D$\C_1$-frame, any final cluster is a singleton by validity of $\C_1$ and is non-degenerate by validity of D, so all final clusters are simple, making the frame validate M. Thus M is a theorem of K4D$\C_1$. 

We can also deal with the logic K4M, which is characterised by finite transitive frames in which all final clusters are simple
 \cite[\S 5.3]{chag:moda97}.
 The space $X_\F$ can be constructed without assuming that $\F$ has any bound on its circumference. If $\F$ validates K4M, then $X_\F$ will be T$_D$, crowded and OI, so will $d$-validate K4M. From this we can conclude 
 \begin{theorem}
K4M is characterised by $d$-validity in all T$_D$ spaces that are crowded and
openly irresolvable.
\qed
 \end{theorem}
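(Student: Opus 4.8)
The plan is to reuse the template established for K4M$\C_n$ almost verbatim, the key observation being that none of the arguments showing $X_\F$ to be T$_D$, crowded, or openly irresolvable ever used the circumference bound, so they all survive its removal. I would prove soundness and completeness separately.

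For soundness, let $X$ be any T$_D$ crowded OI space. Since the scheme 4 is $d$-valid in $X$ exactly when $X$ is T$_D$, and the McKinsey scheme M is $d$-valid in $X$ by Theorem \ref{XdM} (as $X$ is crowded and OI), the $d$-logic of $X$ is a normal logic containing both 4 and M. It therefore contains the smallest such logic, K4M, so every K4M-theorem is $d$-valid in $X$.

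For completeness, suppose $\ph$ is not a theorem of K4M. By the relational completeness of K4M \cite[\S 5.3]{chag:moda97} there is a finite transitive frame $\F=(W,R)$ in which every final cluster is simple, with $\F$ validating K4M but $\F\not\models\ph$. I would then carry out the construction of $X_\F$ from Section 4, putting $X_C$ equal to a singleton when $C$ is degenerate and to a copy of the space $Y_k$ of Theorem \ref{thmYn} when $C$ is non-degenerate of size $k$; no upper bound on $k$ is imposed, since K4M constrains only the final clusters. As each $Y_k$ is T$_1$, hence T$_D$, and singletons are T$_D$, every $X_C$ is T$_D$, so $X_\F$ is T$_D$ by Lemma \ref{lemTD}. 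Because all final clusters are simple and hence non-degenerate, the crowdedness argument from the discussion of K4D$\C_n$ applies unchanged: every basic open neighbourhood of a point either lies in a crowded final-cluster space or absorbs one reachable by $R\up$, so no point is isolated. For open irresolvability the argument of the K4M$\C_n$ lemma transfers directly, since a simple final cluster receives the space $Y_1$, which is a copy of the El'kin space $E$: any non-empty open $O$ contains a non-empty open subset sitting inside some copy of $E$, and as $E$ is HI that subset is irresolvable, forcing $O$ itself to be irresolvable.

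Having shown $X_\F$ to be T$_D$, crowded and OI, I would invoke Lemma \ref{lemdmorph} for a $d$-morphism of $X_\F$ onto $\F$, and then Theorem \ref{dmorph} together with $\F\not\models\ph$ to conclude $X_\F\not\models_d\ph$. Thus $\ph$ is not $d$-valid in the class of T$_D$ crowded OI spaces, which finishes completeness. I expect no genuine obstacle: every constituent lemma was proved in greater generality than is needed here, and the one ingredient that is absent, the hereditary-irresolvability Lemma \ref{lemnHI}, is simply irrelevant because K4M imposes no resolvability bound. The only point demanding care is the routine verification that the T$_D$, crowded and OI arguments really do avoid the circumference hypothesis, depending only on the availability of the spaces $Y_k$ for every $k$ and on the simplicity of the final clusters.
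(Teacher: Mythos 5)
Your proposal is correct and follows essentially the same route as the paper, which likewise notes that the construction of $X_\F$ (and the arguments that it is T$_D$, crowded and openly irresolvable) never used the circumference bound, and then combines the relational completeness of K4M over finite transitive frames with simple final clusters with the $d$-morphism of Lemma \ref{lemdmorph} and Theorem \ref{dmorph}. Your soundness half (4 from T$_D$, M from Theorem \ref{XdM}) also matches the paper's, so there is nothing to correct.
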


\end{document}